%
\documentclass{article}
\pagestyle{plain}
\usepackage{amsthm,amssymb,amsmath}
\usepackage{color}
\usepackage{subcaption}
\usepackage{rotating, graphicx}

\usepackage{multirow}




 \newtheorem{theorem}{Theorem}

\newtheorem{exam}{Example}
\theoremstyle{definition}

\theoremstyle{remark}
\newtheorem{rem}{Remark}
\newtheorem{corollar}{Corollary}
\def \vol{{\rm Vol}}
\title{Induced optimal partition invariancy in linear optimization: constraints perturbation}
\author { Nayyer Mehanfar and Alireza Ghaffari-Hadigheh \\
Dept. of Applied Math. \\
 Azarbaijan Shahid Madani University, Tabriz, Iran\\
{\tt hadigheha@azaruniv.ac.ir}\\
{\tt mehanfar.n@azaruniv.ac.ir}
}
\date{}

\begin{document}
\maketitle
\begin{abstract}
    In this paper, we study uni-parametric linear optimization problems, in which simultaneously the right-hand-side and the left-hand-side of constraints are linearly perturbed with identical parameter.
 In addition to the concept of change point, the induced optimal partition is introduced, and its relation to the well-known optimal partition is investigated. Further, the concept of free variables in each invariancy intervals is described.
A modified generalized computational method is provided with the capability of identifying the intervals for the parameter value where induced optimal partitions are invariant. The behavior of the optimal value function is described in its domain.
Some concrete examples depict the results. We further implement the methodology on some test problems to observe its behavior on large scale problems.
	
\noindent {\bf Keyword:}  Uni-parameter  linear optimization; Change point; Induced partition invariancy analysis; Moore-Penrose inverse; Realization theory
\\ {\bf  MSC 2000}: 90c05,90c31
\end{abstract}

\section{Introduction}
   Existence of inaccuracy and variation in parameters of an optimization problem are indispensable, and investigation of their effects has attracted the primary concern of many researchers.   Deviation from a predetermined optimal solution would be an implication of such variation. Consequently, the current solution fails to be optimal or feasible, and additional burden of problem-solving for other values of parameters is imposed. Parametric programming has denoted its competence in this situation which economically identifies the exact mapping of the optimal solution in the space of parameters;  unnecessary many problems solving is avoided, and the optimal solution can be accordingly adjusted.

To be specific, consider the primal linear program
\begin{equation*}\nonumber\label{pp1}
    (P)~~~~~~\ \min \{ c^{T}x\ :\ Ax = b,\ x \geq 0 \},
\end{equation*}
and its dual as
\begin{equation*}\nonumber\label{pd1}
    (D)~~~~~~\ \max \{b^{T}y\ :\ A^{T}y + s = c,\ s \geq 0 \},
\end{equation*}
where $c \in \mathbb{R}^{n}$,  $b\in \mathbb{R}^{m}$ and $A \in \mathbb{R}^{m\times n}$  are fixed data; and  $x,s \in \mathbb{R}^{n}$ and $y \in \mathbb{R}^{m}$ are unknown vectors. Usually, vectors $b$ and $c$ are referred to as \emph{rim data}.
Assume that these problems are feasible, and denote their feasible solution sets by $\mathcal{P}$ and $\mathcal{D}$,
  respectively. A primal feasible solution $x$ and a dual feasible solution $(y,s)$ are optimal solutions if they satisfy the well-known complementary condition  $x^{T}s=0$.  Optimal solution sets of these problems are denoted by    $\mathcal{P}^{*}$  and  $\mathcal{D}^{*}$,  correspondingly.

  Consider the index set $\lbrace 1,2, \ldots , n \rbrace.$ Let $\mathcal{B}$ be a set of $m$ indices in this identifying a nonsingular submatrix $A_{\mathcal{B}}$ from the columns of~$A$, and $\mathcal{N}=\lbrace 1,2, \ldots , n \rbrace \setminus \mathcal{B}$. The vector $x^T=(x^T_{\mathcal{B}},x^T_{\mathcal{N}})$   is called a primal basic feasible  solution where $x_{\mathcal{B}}=A^{-1}_{\mathcal{B}}b\geq 0$, and $ x_{\mathcal{N}}=0$. A dual basic feasible  solution is identified as  $y=A^{-T}_{\mathcal{B}}c_{\mathcal{B}}$ and $s  = c - A^T A^{-T}_{\mathcal{B}}c_{\mathcal{B}}\geq 0$. Recall that a basic optimal solution specifies a partition of the index set, known as basic optimal partition.

A primal-dual optimal solution $(x^{*},y^{*},s^{*})$  is strictly complementary when  $x^{*}+s^{*}>0.$
By the Goldman-Tucker's theorem~\cite{GT56}, the existence of a strictly complementary solution is guaranteed if both Problems $(P)$ and $(D)$ are feasible. Interior point methods start from a solution and terminate at a strictly complementary optimal solution \cite{RT97}.
Having a primal-dual strictly complementary optimal solution, the index set $\lbrace 1,2, \ldots , n \rbrace$ is partitioned as
\begin{eqnarray*}
{B}&:=&\lbrace j\vert x^{*}_{j} > 0,\ {\rm{for~some}}\ x^{*}\in  \mathcal{P}^{*}    \rbrace,\\
{N}&:=&\lbrace j\vert s^{*}_{j} > 0, \ \ {\rm{for~some}}\ (y^{*},s^{*})\in  \mathcal{ِD}^{*}   \rbrace.
\end{eqnarray*}
This partition is denoted by $\pi =({B},{N})$,  and known as optimal \label{2pp} partition. It is identical with the basic optimal partition only if the primal and dual optimal solutions are nondegenerate.

We consider the following uni-parametric linear optimization problem
\begin{equation*}\label{mopt2m}
   P(\triangle  A,\triangle  b)~~~~~~\  \min \{c^{T}x :\ (A+\lambda  \triangle  A)x=b+\lambda \triangle  b,\ x \geq 0 \},
\end{equation*}
where $\lambda  \in \mathbb{R}$ is a parameter, $\triangle  b \in \mathbb{R}^{m}$ and $\triangle  A \in \mathbb{R}^{m\times n}$ are perturbing directions with no specific restriction on  $\triangle  A$ and $\triangle  b$.

In uni-parametric programming,  it is assumed that the data are perturbed along a direction according to the parameter. The aim is to identify the region for the parameter value, where specific properties of the current optimal solution do not alter.  For example, if the current optimal solution is basic, one might be interested in identifying the region at which the known optimal basis is invariant for every parameter values in this region. On the other hand, when the solution is strictly complementary, the aim could be the specifying of a region at which the known optimal partition is invariant for every parameter value in this region.
We characterize conditions that guarantee the convexity of these regions.
 Recall that degeneracy of a primal or dual basic solution leads to having multiple dual or primal optimal solutions. Therefore, the basic optimal partition may not be unique in general. In spite, the convexity of optimal solution sets implies the uniqueness of the optimal partition.

We recall that the Problem $P(\triangle  A,\triangle  b)$ has been studied when the basic optimal partition is known. The case when either $\triangle  A$ or $\triangle  b$ is zero, also studied.
In this paper, we investigate the behavior of Problem $P(\triangle  A,\triangle  b)$ when the optimal partition is known, and identify the regions where the given optimal partition is invariant. To do this end, we have to define the notion of induced optimal partition. Despite the optimal partition, which defines transition points, induced optimal partition suggests other points; here, we refer to as change points. These two types of points lead to determine the regions as intervals.

  A computational algorithm is provided for finding all potential such intervals. Furthermore, the representation of the optimal value function is presented, and the results are clarified using some concrete examples. The methodology is also implemented on some test problems to investigate the efficiency of the algorithm and to observe computational challenges.

 The rest of paper is organized as follows. Section \ref{Literature} is devoted to reviewing the so-far-existing results of linear parametric programming in a nutshell. In Section 3, in addition to expression the admissibility of a direction $(\triangle  A, \triangle b)$ of $P_{\lambda}(\triangle A,\triangle b),$ we review some other necessary concepts as optimal partition invariancy on Problem $P_{\lambda}(\triangle A),$ pseudo-inverse, and realization theory. In Section~\ref{sec3},  the concept of optimal partition invariancy on principal problem is generalized  to the  Problem $P_{\lambda}(\triangle A)$. The notion of change point is defined and distinguished from the transition point. Its economical interpretation is highlighted via a simple example.  In Section 5, a generalized explicit formula is presented that identifies a closed form of the optimal value function on each invariancy interval. The process of finding all transition points,  change points, and invariancy intervals are provided in Section \ref{sec6}. Some concrete examples illustrate the results in Section  \ref{sec7}, and the methodology is also implemented on some test problems. The final section contains some concluding remarks.

\section{  Literature Review}\label{Literature}

Let us first review some findings in parametric linear programming. In a parametric linear optimization when right-hand-side is perturbed, it is proved that the invariancy regions are open intervals if they are not a singleton. Moreover, the optimal value function is a continuous piecewise linear over these intervals~\cite{RT97}, and therefore, these intervals may be referred to as linearity intervals. Singleton regions are referred to as break points since optimal value function does not have a continuous derivative at these points.

Perturbation at the left-hand-side alongside a direction $\triangle A$ with arbitrary rank has been studied to find basic invariancy intervals\cite{FR85}.  The author studied properties of the optimal value function and the associated optimal basic feasible solution
in a neighborhood about a fixed parameter value. Computable formulas for the Taylor series' (and hence all derivatives) of the optimal value function were provided, as well as for the primal and dual optimal basic solutions as functions of the parameter. It also identifies the parameter values where the optimal value function is differentiable in the case of degeneracy.

In another study, the author considered the same problem and could present a closed form of the optimal value function as a fractional function of the parameter in the optimal basic invariancy interval \cite{zuidwijk2005linear}.
  It was denoted that the optimal value function is piecewise fractional in terms of the parameter.   In another study, a solution algorithm has presented for a linear program with inequality constraints and a single parameter at their left-hand-side \cite{KK14}. This algorithm includes inversion techniques of perturbed matrices, which accompanies with some computational complexities.  The case when the problem is in the canonical form, a primal-dual strictly complementary optimal solution is in hand, and perturbing direction is of rank one, has been studied in \cite{HG99}.

Recently an algorithm has been introduced for the exact solution of multi-parametric linear programming problems with inequality constraints~\cite{CL17}. The perturbation occurs simultaneously on the objective function’s coefficients, the right-hand-side, and the left-hand-side of the constraints. This algorithm is based on the principles of symbolic manipulation and semi-algebraic geometry, and critical regions are identified by semi-algebraic geometry. From the critical region, the authors meant the region where active constraint sets are invariant.  , It is shown that these regions are neither necessarily convex nor connected.   The entire parametric space is explored implicitly within the algorithm, while there is no necessity for determining the inverse of parametric matrices. It was shown that the objective function is fractional on critical regions.  
  Though their considered problem can be reduced to $P(\triangle A,\triangle  b ),$ their assumption on the invariancy of the active set of constraints necessitate having an optimal basic solution. Moreover, their approach is highly dependent on mathematical software, which may increase the complexity of computation. As the author acknowledged,  their methodology is not efficient for large-scale problems.

In another study, the case when $\triangle A$ is of arbitrary rank, and the problem is in standard form with known optimal partition, has been investigated \cite{GM15}. Their method was an adaptation of the proposed approach in~\cite{zuidwijk2005linear}.
The authors presented a computational procedure for finding all intervals, and the representation of the optimal value function on these intervals. They further investigated some properties of the optimal value function.
 Let us call the considered problem in~\cite{GM15} as the principal parametric linear program  which, in our notation,  is
\begin{equation}\nonumber
   P_{\lambda }(\triangle A) ~~~\ \min \{c^{T}x\ :\ (A+\lambda  \triangle  A)x=b,\ x \geq 0 \}.
\end{equation}
Its dual is
\begin{equation}\nonumber
   D_{\lambda }(\triangle A) ~~~\ \max \{b^{T}y\ :\ (A+\lambda  \triangle  A)^{T}y  + s =c,\ s \geq 0 \}.
\end{equation}
 Let $\mathcal{P}^{*}(\lambda )$  and  $\mathcal{D}^{*}(\lambda )$  denote the optimal solution sets of  $P_{\lambda }(\triangle A)$ and $D_{\lambda }(\triangle A)$, respectively.

The main results deserve to mention are as follows. Despite the case when only rim data is perturbed,  that the domain of the optimal value function is closed~\cite{ART7, RT97}, this domain might be open when $\triangle A$ is perturbed. Further, there is no clear relation between optimal partitions at a breakpoint and at its potential neighboring intervals despite the perturbation at rim data~\cite{AH10}. Moreover, the invariancy region might not be convex unless with some conditions\cite{HG99}.
  The last, unlike rim variation, the set of admissible changes (See page \pageref{paadd}) corresponding to Problem $P_{\lambda }(\triangle A)$ is not a convex set~\cite{HG99}.
\section{Preliminaries}
In this section, some necessary preliminary concepts and assumptions for the convexity of the invariancy region are posed, and optimal partition invariancy is reviewed in our literature. It follows by the concept of Moore-Penrose inverse and finalizes with some facts from realization theory.
\subsection{Convexity of the invariancy region}
 In the following subsection, we consider some assumptions that guarantee the convexity of the invariancy region for the problem $P(\triangle  A,\triangle  b).$
Observe that the  dual of this problem is \label{paadd}
\begin{equation*}\label{moptd3m}
  D(\triangle  A,\triangle b) ~~~~ \  \max \{(b+\lambda  \triangle  b )^{T}y :\ (A+\lambda  \triangle  A)^{T}y +s = c, s \geq 0 \}.
\end{equation*}
Here  $\lambda  $ runs throughout a nonempty subset $\Lambda \subseteq \mathbb{R}$, where  Problem $P(\triangle  A,\triangle  b)$ has an optimal solution for every parameter value $\lambda \in \Lambda $. This set is nonempty since it is assumed that  Problems $P(\triangle  A,\triangle  b)$ and $D(\triangle  A,\triangle  b)$ are feasible at $\lambda =0 $. In general,  $\Lambda $ is not a convex set, and  to guarantee its convexity, we have to make some assumptions.  First, let  $\triangle   =(\triangle  A, \triangle  b) $ denote a perturbing direction referred to as \emph{change direction}. For a $\lambda  \in \mathbb{R}$,
$\lambda \triangle$ is said an \emph{admissible change} if  Problem $P(\triangle  A,\triangle  b)$  has an optimal solution, or equivalently
\begin{equation}\nonumber
  \exists (x,y), x\geq 0 :\ (A+\lambda  \triangle  A)x=b+\lambda  \triangle  b ,\ (A+\lambda  \triangle  A)^{T}y\leq c.
\end{equation}
There are some instances at which,  $\lambda   \triangle$  is not an admissible change for all $\lambda  \in  (0,\lambda  ^{*})$, just because $\lambda ^*  \triangle$ is an admissible change.  A change direction $\triangle$ is an \emph{admissible direction} if there exists $\lambda  ^{*}>0$, such that $\lambda  \triangle $ is an admissible change for all $\lambda  \in [0,\lambda  ^{*}).$
 Let us denote the set of all admissible directions by $\mathcal{A}$.
For    an admissible direction $\triangle$, let $\Lambda (\triangle ) := \{ \lambda  : \ \lambda  \triangle   \in \mathcal{A} \}$ and $\lambda  ^{*} (\triangle ) := \sup \{\lambda ^{* } : \ \forall \lambda  \in  [0,\lambda  ^{*}), \lambda \in \Lambda (\triangle ) \}.$
Analogous to  \cite{HG99}, it can be  proved that if $\mathcal{A} = \bigcup _{k=1}^{K} \{ \mathcal{P}_{k}\}$, and each $\mathcal{P}_{k}$ is a polyhedron containing the origin, then  $\Lambda (\triangle  )$ is simply an interval. In this study, we assume that these conditions are fulfilled.

\subsection{Optimal partition invariancy } \label{optin}
Let us quote the notion of optimal partition from~\cite{GM15}, and clarify its relationship with the optimal partition defined in Page~\pageref{2pp}.
Let
\begin{equation}\label{taus}
  \tau  :\{1,\ldots,l\} \rightarrow \{1,\ldots,n\},  { {\tau }'}:\{1,\ldots,n-l\} \rightarrow \{1,\ldots,n\},
\end{equation}
 be injective and strictly increasing functions, where $ 1\leq  l  \leq \max  \{m,n\}$, and
${\rm Range}(\tau  )\cup {\rm Range}({{\tau  }'})=\{1,\ldots,n\}$. Further, let 
\begin{equation}\label{eb}\nonumber
{\rm E}(\tau  ) = (e_{{\tau } (1)} \ \ldots \ e_{{\tau } (l)}): {\mathbb R} ^l \rightarrow {\mathbb R}^n,
\end{equation}
be a map where $A_{\tau  }= A {\rm E}(\tau  ):{\mathbb R} ^l \rightarrow {\mathbb R}^m$ and $c_{\tau }={\rm E}(\tau  )^Tc\in {\mathbb R}^l $. Let $\lambda _{0} \in \mathbb R$ such that $(x^{*}(\lambda _{0}), y^{*}(\lambda _{0}),$  $s^{*}(\lambda _{0}))$ is a strictly complementary optimal solution of Problems $P_{\lambda _{0}}(\triangle A)$ and $D_{\lambda _{0}}(\triangle A) $ with the optimal partition $\pi (\lambda  _{0}) =({B}(\lambda  _{0}),{N}(\lambda  _{0}))$, where
\begin{eqnarray*}
{B}(\lambda  _{0})&:=&\lbrace j\vert x^{*}_{j}(\lambda  _{0}) > 0,\ {\rm{for~some}}\ x^{*}(\lambda _{0})\in  \mathcal{P}^{*}({\lambda  _{0}})    \rbrace,\\
{N}(\lambda  _{0})&:=&\lbrace j\vert s^{*}_{j} (\lambda  _{0})> 0, \ \ {\rm{for~some}}\ (y^{*}(\lambda  _{0}),s^{*}(\lambda  _{0}))\in  \mathcal{D}^{*} ({\lambda  _{0}})  \rbrace.
\end{eqnarray*}
Moreover, let $\Lambda_{\pi (\lambda  _{0})} \subseteq \mathbb{R}$ be a set of parameters such that for each $\lambda   \in \Lambda_{\pi (\lambda  _{0})} $, the principal Problem $P_{\lambda }(\triangle A)$ has an optimal solution $(x^{*}(\lambda  ), y^{*}(\lambda  ), s^{*}(\lambda  ))$ with the  optimal partition  $\pi (\lambda  _{0}) =({B}(\lambda  _{0}),{N}(\lambda  _{0}))$. Clearly, any strictly complementary optimal solution corresponding to an arbitrary $t \in \Lambda _{\pi (\lambda _{0})}$  implies $ {B} (\lambda ) = \lbrace \tau (1), \ldots , \tau (l) \rbrace$ and $ {N} (\lambda )= \lbrace {\tau }' (1), \ldots , {\tau }' (n- l) \rbrace$. Furthermore, when  the following three properties hold for $\lambda \in \Lambda_{\pi (\lambda _0)}$ \label{proper13}
\begin{description}
\item[\textbf{Property 1.}]  $A_{\tau }(\lambda )$ has a pseudo-inverse,
 \item[\textbf{Property 2.}]   $x_{\tau}(\lambda )=A_{\tau}(\lambda )^{\dagger}b> 0$,
  \item[\textbf{Property 3.}]    $s_{{\tau}'}(\lambda ) >0$ (or equivalently $c_{{\tau}'}^T-c_{\tau }^TA_{\tau}(\lambda)^{\dagger}A_{{\tau}'}(\lambda )> 0$), \label{c3n}
\end{description}
   then $(x(\lambda ), y(\lambda ), s(\lambda ))$ is a strictly complementary optimal solution of  Problems $P_{\lambda }(\triangle A)$ and $D_{\lambda }(\triangle A) $ with optimal partition $\pi (\lambda _0)$.
   Recall that the aim of optimal partition invariancy is to find the region $\Lambda_{\pi (\lambda _0)}$, where for every  $\lambda   \in \Lambda_{\pi (\lambda _0)}$, optimal partition of the associated problem is $\pi (\lambda _0) $. This is equivalent to establishment of  Properties 1-3 for such  a parameter $\lambda $.

\subsection{Moore-Penrose  inverse}\label{sec2}
There exists a unique matrix for a real matrix $A \in {\mathbb R}^{m \times n} $,  is known as Moore-Penrose  inverse (or simply pseudo-inverse) and denoted by $A^{\dagger} \in {\mathbb R}^{n \times m}$, that satisfies the following equations.
\begin{eqnarray}\label{0d}
A^{\dagger}AA^{\dagger}&=&A^{\dagger},\\ \label{1d}
AA^{\dagger}A&=&A,\\ \label{2d}
(A^{\dagger}A)^{*}&=&A^{\dagger}A,\\\label{3d}
(AA^{\dagger})^{*}&=&AA^{\dagger},
\end{eqnarray}
where $A^{*}$ denotes the conjugate transpose of  matrix $A$.
 In general, $AA^{\dagger}$ is not necessarily an identity matrix, while it maps all column vectors of $A$ to themselves, and $(A^{\dagger})^{\dagger} = A$. A matrix satisfying (\ref{0d}) and (\ref{1d}) is known as a generalized inverse. It  always exists but is not usually unique unless  Conditions (\ref{2d}) and (\ref{3d}) hold, too. Observe that if a matrix is nonsingular, the pseudo-inverse and the inverse coincide. For more details, one can refer to~\cite{ben2003generalized}.

Let $\mathcal{H}(A)$ \label{1nm} be an index set of nonsingular $r \times r$ submatrices $A_{IJ}$ of $A\in {\mathbb R}^{m \times n}$ with ${\rm{Rank}}(A)=r$. It was { proved~\cite{ben1992volume}} that the pseudo-inverse $A^\dag$  is a convex combination of ordinary inverses $\{A_{IJ}^{-1}\ :\ (I,J)\in \mathcal{H}(A)\}$, as
\begin{equation}\label{1i}\nonumber
    A^\dag=\displaystyle \sum_{(I,J)\in {\mathcal {H}}(A)} t_{IJ}\widehat{A_{IJ}^{-1}},
\end{equation}
where $\widehat{A_{IJ}^{-1}}$ denotes that $A_{IJ}^{-1}$  padded with the right number of zeros in the right places, and
\begin{equation}\label{4i}\nonumber
    t_{IJ}=\frac{{\det }^2A_{IJ}}{{\displaystyle \sum_{(\bar{I},\bar{J})\in {\mathcal H}(A)}}{\det }^2 A_{\bar{I}\bar{J}}},\  (I,J)\in \mathcal{H}(A).
\end{equation}
 The volume of $A$, denoted by $\vol ~A$, is defined as  0 if $r=0$ and
\begin{equation}\label{5i}\nonumber
    \vol~A := \sqrt{{\displaystyle \sum_{(I,J)\in {\mathcal H}(A)}{\det }^2 A_{IJ}}},
\end{equation}
otherwise~\cite{ben1992volume}.
Let    $b_I$ be a subvector of $b\in {\mathbb R}^m$ with indices in $I$. The Euclidean norm least squares solution of the linear system $Ax=b$ is a convex combination of basic solutions $A_{IJ}^{-1} b_I$, i.e.,
\begin{equation}\label{3i}
    x=A^\dag b= \displaystyle \sum_{(I,J)\in {\mathcal {H}}(A)} t_{IJ}\widehat{A_{IJ}^{-1} b_I}.
\end{equation}
  The representation (\ref{3i}) follows easily for $A$ of full column rank~\cite{BT90}.
When $A$ is a matrix of full row rank, a solution $x$ is  in the row space of $A,$ i.e.,   $x=A^{T}(AA^{T})^{-1}b$.
\subsection{Realization theory}
Here, the realization theory for scalar rational functions is reviewed. More details for   matrix-valued and operator-valued functions can be found  in~\cite{BGK79}. A fundamental observation in realization is that, when $b,c\in {\mathbb R}^l$ and $C\in {\mathbb R}^{l\times l},$ then   $f(\lambda)=1+ \lambda c^T(I_l+\lambda C)^{-1}b$
is a rational function and can be described completely in terms of eigenvalues of two matrices $C$ and $C^{\times}=C+bc^T$ as
\begin{eqnarray*}
f(\lambda) &=& \det f(\lambda ) = \det (1+\lambda c^T(I_l +\lambda C)^{-1}b)=\det (I_l +bc^T(I_l+\lambda C)^{-1})\\
&=&\frac {\det (I_l+\lambda (C+bc^T))}{\det (I_l +\lambda C)} = \frac {\det (I_l +\lambda C^{\times})}{\det (I_l +\lambda C)},
\end{eqnarray*}
{ where   $I_{l}$ is an $l {\times} l$ identity matrix. }
 More explicitly, let $\alpha_1,\ldots ,\alpha_l$ be eigenvalues of $C$ and $\alpha ^{\times}_1,\ldots ,\alpha ^{\times }_l$ be eigenvalues of $C^{\times}$, counted according to their multiplicities. Then
\begin{equation}\label{1}
    f(\lambda)=\displaystyle \prod_{j=1}^{l} \frac{1+\lambda \alpha ^{\times }_j}{1+\lambda \alpha_j}.
\end{equation}
The number $l$, i.e., the  factors in numerator and denominator, on the right-hand-side of (\ref{1}) is minimal when $C$ and $C^{\times }$ do not have  common eigenvalues.

\section{New concepts and main results}\label{sec3}
In this section, we present our approach for finding the invariancy interval of Problem $P( \triangle  A,\triangle  b ).$ The first step is to convert this problem to the one with only perturbation in the left-hand-side of the constraints.

Recall that two problems are  equivalent if they contribute similar features, and  a solution of one  can be immediately identified by the other's, while they may have different numbers of variables and constraints~\cite{BV04}.
Let us rewrite $P( \triangle  A,\triangle  b )$ as
\begin{equation}\label{mop21c12} \nonumber
  \min \{c^{T}x\ :\ Ax+ \lambda (\triangle  A x -\triangle  b)= b,\  x\geq 0 \}.
\end{equation}
Considering  $z =\triangle  Ax - \triangle  b,$ as a constraint,  leads to a problem in which only the coefficient matrix is  perturbed with  $\lambda $
 as
\begin{equation}\nonumber
   P_{\lambda }(\mathbf{\triangle  A}) \ \ \min \{\mathbf{c}^{T}\mathbf{x}\ :\ (\mathbf{A} + \lambda  \mathbf{\triangle  A}) \mathbf{x} = \mathbf{b}, \ \mathbf{x}= (x,z)^{T}
, \ x \geq 0 \},
\end{equation}
with
\begin{equation}\nonumber
\mathbf{c}=\begin{pmatrix}
c \\
0
\end{pmatrix},
\mathbf{A}=\begin{pmatrix}
A & 0  \\
\triangle  A & -I_{m}
\end{pmatrix},
  \mathbf{\triangle  A}= \begin{pmatrix}
0 & I_{m}\\
0 & 0
\end{pmatrix},
  \mathbf{b}=\begin{pmatrix}
b \\
\triangle  b
\end{pmatrix},
\end{equation}
where   $I_{m}$ is an $m \times m$ identity matrix, zeros are matrices of appropriate sizes,  $\mathbf{x}_{j}:=x_{j}, j=1, \ldots ,n $, and $\mathbf{x}_{n+i}:=z_{i}, i=1, \ldots ,m$.
The optimal value function of this problem is denoted by $Z(\lambda)$.
The dual of $P_{\lambda }(\mathbf{\triangle  A})$ is
\begin{equation}\nonumber
   D_{\lambda }(\mathbf{\triangle  A}) \ \ \max \{b^{T}y + \triangle  b^{T} w  :\ A^{T} y + \triangle  A ^{T} w +s= c, \ \lambda  y - w=0,\ s\geq 0 \}.
\end{equation}
 Suppose $\mathcal{P}_{\lambda }^{*}(\mathbf{\triangle  A})$ and  $\mathcal{D}_{\lambda }^{*}(\mathbf{\triangle  A})$, respectively denote  optimal solution sets of  $P_{\lambda }(\mathbf{\triangle  A})$ and $D_{\lambda }(\mathbf{\triangle  A}).$ Further, let $\sigma (v) = \lbrace i \vert {v}_{i} \neq 0 \rbrace $ as the support set of an arbitrary vector $v$.  Let  ${\mathbf{s}^{*{T}}(\lambda _{0})}=(s^{{*}{T}}(\lambda _{0}),0),$ where zero is a row vector with dimension $m,$ and $ {\mathbf{y}^{*T}(\lambda  _{0})}=(y^{*T}(\lambda  _{0}),w^{*T}(\lambda  _{0}))$. We define the partition $\bar{\pi}(\lambda  _{0}) =({\bar{B}}(\lambda  _{0}),{\bar{N}}(\lambda  _{0}))$ as   $\bar{B}(\lambda  _{0}) :=  {B}(\lambda _{0}) \cup {B^{-}}(\lambda  _{0}) \cup {B^{+}}(\lambda  _{0})$    with
   \begin{eqnarray*}
     B(\lambda  _{0}) &:=& \lbrace j \vert {\mathbf{x }_{j}^{*}}(\lambda  _{0}) >0,1\leq   j \leq n,  \ {\rm{for~some}}\ \mathbf{x }^{*}(\lambda  _{0})\in  \mathcal{P}_{\lambda _{0} }^{*}(\mathbf{\triangle  A}) \rbrace,\\
     {B^{-}}(\lambda  _{0}) &:=& \lbrace j \vert {\mathbf{x }_{j}^{*}}(\lambda  _{0}) <0,n+1\leq   j \leq n+m, \ {\rm{for~some}}\ \mathbf{x }^{*}(\lambda  _{0})\in  \mathcal{P}_{\lambda  _{0}}^{*}(\mathbf{\triangle  A}) \rbrace,\\
     {B^{+}}(\lambda  _{0})& :=&\lbrace j \vert {\mathbf{x }_{j}^{*}}(\lambda  _{0}) >0,n+1\leq   j \leq n+m, \ {\rm{for~some}}\ \mathbf{x }^{*}(\lambda  _{0})\in  \mathcal{P}_{\lambda  _{0}}^{*}(\mathbf{\triangle  A}) \rbrace,
        \end{eqnarray*}
   and ${\bar{N}}(\lambda  _{0}) :={N}(\lambda  _{0}) \cup {N^{0}}(\lambda  _{0})$ with
    \begin{eqnarray*}
      {N}(\lambda  _{0}) &:=&\lbrace j \vert {\mathbf{s }_{j}^{*}}(\lambda  _{0}) >0, 1\leq   j \leq n,  \ {\rm{for~some}}\ (\mathbf{y }^{*}(\lambda  _{0}),\mathbf{s }^{*}(\lambda  _{0}))\in  \mathcal{D}_{\lambda _{0} }^{*}(\mathbf{\triangle  A}) \rbrace,\\
      {N^{0}}(\lambda _{0}) &:=&\lbrace j \vert {\mathbf{x }_{j}^{*}}(\lambda  _{0}) =0, n+1\leq   j \leq n+m,  \ {\rm{for~{all}}}\ \mathbf{x }^{*}(\lambda _{0})\in  \mathcal{P}_{\lambda  _{0}}^{*}(\mathbf{\triangle  A})\rbrace .
    \end{eqnarray*}
   It can be easily understood that $B(\lambda _0)$     in this partition is the same as in the optimal partition extracted from a strictly complementary optimal solution of Problems $P( \triangle  A,\triangle  b )$ and  $D( \triangle  A,\triangle  b)$ at $\lambda  _{0}$. Moreover, considering the equivalence of two Problems $P( \triangle  A, \triangle  b )$ and $ P_{\lambda  }(\mathbf{\triangle  A}), $ an optimal solution $(\mathbf{x}^{*}(\lambda  ),\mathbf{y}^{*}(\lambda ) ,\mathbf{s}^{*}(\lambda  ))$ can be induced by an optimal solution  $x^{*}(\lambda  )$ of Problem $P( \triangle  A, \triangle  b )$ and vice versa. Therefore, we refer to such an optimal solution $\mathbf{x}^{*}(\lambda )$ for $ P_{\lambda }(\mathbf{\triangle  A}) $ as induced optimal solution, and   the  corresponding partition as induced optimal partition.
    Note that, an induced optimal solution $(\mathbf{x}^{*}(\lambda  ),\mathbf{y}^{*}(\lambda  ),\mathbf{s}^{*}(\lambda ))$ is an induced strictly complementary solution when for all $j\in \{1,\ldots ,n \},\ \mathbf{x}_{j}^{*}(\lambda  ) +\mathbf{s}_{j}^{*}(\lambda  )>0,$ and for $n+1 \leq j \leq m+n,\ \mathbf{x}_{j}^{*}(\lambda )$ and $\mathbf{s}_{j}^{*}(\lambda  )$ are not simultaneously zero.

In the sequel, we assume that $\pi (\lambda _{0}) =({B}(\lambda _{0}),{N}(\lambda _{0}))$ is the known optimal partition of Problems $P( \triangle  A, \triangle  b )$ and $D(\triangle  A, \triangle  b)$ at $\lambda _{0}$,
and $\bar{\pi}(\lambda _{0}) =({\bar{B}}(\lambda _{0}),{\bar{N}}(\lambda _{0}))$ is the induced optimal partition
 of Problems $P_{\lambda _{0} }(\mathbf{\triangle  A})$ and  $D_{\lambda _{0} }(\mathbf{\triangle  A}).$ Let $l=\vert {B}(\lambda _{0}) \vert $ and $ \bar{l}=\vert {\bar{B}}(\lambda _{0}) \vert .$ Obviously, $l \leq \ n$ and $l \leq \bar{l}\leq m+n$.
Let us adapt the notations and concepts of the principal Problem $P_{\lambda }(\triangle A)$ to the Problem $P_{\lambda }(\mathbf{\triangle  A})$.
 Let
 $$\begin{array}{l}
   \bar{\tau} :\{1,\ldots,l, \ldots, \bar{l}\} \rightarrow \{1,\ldots,n,\ldots , n+m\},\\
   {\bar {\tau}}^{'}:\{1,\ldots ,(n+m)- \bar{l}\} \rightarrow \{1,\ldots,n,\ldots,n+m\},
 \end{array}$$
 be injective and strictly increasing functions
with ${\rm Range}({\bar {\tau}})\cup {\rm Range}({\bar {\tau}}^{'})=\{ 1,\ldots , n+m\}$.
Analogously, define
\begin{equation}\nonumber
  {\rm E}(\bar{\tau}) = (e_{{\bar{\tau}} (1)} \ \ldots \ e_{{\bar{\tau}} (l)}\ \ldots \ e_{{\bar{\tau}} (\bar{l})}): {\mathbb R} ^{\bar{l}} \rightarrow {\mathbb R}^{(n+m)}.
\end{equation}
 It can be easily observed that $ {\bar{B}}(\lambda _{0})= \lbrace \bar{\tau}(1), \ldots ,\bar{\tau}(l), \ldots , \bar{\tau}(\bar{l}) \rbrace $ and $ {\bar{N}}(\lambda _{0})= \lbrace {\bar{\tau}}^{'}(1), \ldots , {\bar{\tau}}^{'}(n+m-\bar{l}) \rbrace.$
Moreover, when $ \lbrace i |n+1\leq i \leq  m+n,   {v}_{i} \neq 0   \rbrace$ is empty then $l =\bar{l},$ i.e., $ {\bar{B}}(\lambda _{0})={B}(\lambda _{0})= \lbrace \tau (1), \ldots ,\tau (l) \rbrace .$
 In this case,
\begin{equation}\nonumber
 {\mathbf{x}}^{*}_{j}(\lambda _0) \left\{ \begin{array}{ll}
> 0, & \mbox{if} \ j \in \lbrace \bar{\tau}(1), \ldots , \bar{\tau} (l)\rbrace ,\\ \\
= 0, & \mbox{if} \ j \in \lbrace {\bar{\tau}}^{'}(1), \ldots , {\bar{\tau}}^{'} (n+m-l)\rbrace.
\end{array}
\right.
\end{equation}
On the other hand, for $l<\bar{l}$,
\begin{equation}\nonumber
{\mathbf{x}}^{*}_{j}(\lambda _0) \left\{ \begin{array}{ll}
> 0, & \mbox{if} \ j \in \lbrace \bar{\tau}(1), \ldots , \bar{\tau} (l)\rbrace ,\\ \\
\neq 0, & \mbox{if} \ j \in \lbrace \bar{\tau}(l+1), \ldots , \bar{\tau} (\bar{l})\rbrace ,\\ \\
= 0, & \mbox{if} \ j \in \lbrace {\bar{\tau}}^{'}(1), \ldots , {\bar{\tau}}^{'} (n+m-\bar{l})\rbrace.
\end{array}
\right.
\end{equation}
Further, for the induced optimal solution $({\mathbf{y}}^{*}(\lambda _{0}),{\mathbf{s}}^{*}(\lambda _{0}))$ of Problem $ D_{\lambda _0}( \mathbf{\triangle  A})$, it holds
\begin{equation}\nonumber
 {\mathbf{s}}^{*}_{j}(\lambda _0) \left\{
 \begin{array}{ll}
 > 0, & \mbox{if}~ \ j \in {\bar{N}}(\lambda _{0})\\
= 0, &  \mbox{otherwise.}
\end{array}
\right.
\end{equation}
The following theorem states { necessary and sufficient conditions} for an optimal solution of $P_{\lambda _0}( \mathbf{\triangle  A})$ being an induced optimal solution.
\begin{theorem} \label{thg}
  Let $\lambda \in \Lambda ,$ and $\bar{\pi}(\lambda ) =({\bar{B}}(\lambda ),{\bar{N}}(\lambda )).$
   Then, $({\mathbf{x}}^{*}(\lambda ), {\mathbf{y}}^{*}(\lambda ), {\mathbf{s}}^{*}(\lambda ))$ is an induced strictly complementary optimal solution of $ P_{\lambda }( \mathbf{\triangle  A})$ and $ D_{\lambda }( \mathbf{\triangle  A})$
     if and only if
\begin{description}
\item{{\rm Cond. 1.}} $\mathbf{A}_{\bar{\tau }}(\lambda )$ has pseudo-inverse,
\item{{\rm Cond. 2.}}
For $ 1\leq q \leq \bar{l}$, $({\mathbf{x}}_{\bar{\tau}}^{*}(\lambda ))_{q} =e^{T}_{q}{\mathbf{A}}_{\bar{\tau}}^{\dagger}(\lambda ) \mathbf{b} $ is positive when $q \in {B} \cup {B^{+}}$, negative when $q \in {B^{-}}$, and zero otherwise,
\item{{\rm Cond. 3.}}
 ${\mathbf{s}}_{{\bar{\tau}}^{'}}^{*}(\lambda )= s_{{\tau }'}^{*}(\lambda ) = c_{ {\tau}'}^T-c_{\tau }^T A^{\dagger}_{\tau}(\lambda )A_{{\tau}'}(\lambda )> 0,$ and zero otherwise. \label{nbn}
\end{description}
\end{theorem}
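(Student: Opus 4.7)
My plan is to split the biconditional into its two directions and in each case reduce the question to the standard primal--feasibility / dual--feasibility / strict--complementarity characterization of optimality, then express the active part of the solution through the pseudo-inverse identities (\ref{0d})--(\ref{3d}) and the representation (\ref{3i}). The argument should closely parallel the one used to establish Properties 1--3 for the principal problem $P_{\lambda}(\triangle A)$ in Section~\ref{optin}, with the extra bookkeeping that the $z$-variables $\mathbf{x}_{n+i}$ are free in sign and therefore split the basic block $\bar{B}$ into the three sub-blocks $B$, $B^{+}$, $B^{-}$.

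For the implication (Cond.~1--3)~$\Rightarrow$~(induced strict complementarity), I would construct the candidate solution by setting $\mathbf{x}_{\bar{\tau}}^{*}(\lambda)=\mathbf{A}_{\bar{\tau}}^{\dagger}(\lambda)\mathbf{b}$ and $\mathbf{x}_{\bar{\tau}'}^{*}(\lambda)=0$, and on the dual side by forcing $\mathbf{s}_{\bar{\tau}}^{*}(\lambda)=0$ and $\mathbf{s}_{\bar{\tau}'}^{*}(\lambda)=c_{\tau'}^{T}-c_{\tau}^{T}A_{\tau}^{\dagger}(\lambda)A_{\tau'}(\lambda)$. Primal feasibility $\mathbf{A}_{\bar{\tau}}(\lambda)\mathbf{A}_{\bar{\tau}}^{\dagger}(\lambda)\mathbf{b}=\mathbf{b}$ then follows from (\ref{1d}), after noting that $\mathbf{b}\in\mathrm{Range}(\mathbf{A}_{\bar{\tau}}(\lambda))$ because $\lambda\in\Lambda$ guarantees an optimal (hence feasible) solution. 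The nonnegativity of the first $n$ coordinates of $\mathbf{x}^{*}(\lambda)$ and the prescribed signs of the last $m$ coordinates match the definitions of $B$, $B^{+}$ and $B^{-}$ by Cond.~2; complementary slackness is automatic from the disjoint supports; and the ``not simultaneously zero'' requirement on $n+1\le j\le n+m$ holds because Cond.~2 forbids a zero value on $B^{+}\cup B^{-}$ while Cond.~3 forbids a zero slack on $\bar{\tau}'$.

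For the converse, starting from an induced strictly complementary optimal solution, the definition of $\bar{\pi}(\lambda)$ forces $\mathbf{x}_{\bar{\tau}'}^{*}(\lambda)=0$, so primal feasibility collapses to the consistent system $\mathbf{A}_{\bar{\tau}}(\lambda)\mathbf{x}_{\bar{\tau}}^{*}(\lambda)=\mathbf{b}$; Cond.~1 is then automatic for any real matrix, and combining consistency with (\ref{3i}) produces the pseudo-inverse representation in Cond.~2, whose coordinate signs are precisely those recorded by the sub-partition $\bar{B}=B\cup B^{+}\cup B^{-}$. For Cond.~3, the second dual equation $\lambda y-w=0$ of $D_{\lambda}(\mathbf{\triangle A})$ eliminates $w=\lambda y$, reducing the dual slack on the indices $1\le j\le n$ to the slack of $D_{\lambda}(\triangle A)$; the equality $\mathbf{s}_{\bar{\tau}'}^{*}(\lambda)=s_{\tau'}^{*}(\lambda)$ and the closed form $c_{\tau'}^{T}-c_{\tau}^{T}A_{\tau}^{\dagger}(\lambda)A_{\tau'}(\lambda)$ then follow from $A_{\tau}^{T}y^{*}+s^{*}_{\tau}=c_{\tau}$ with $s_{\tau}^{*}=0$, while the strict inequality on $\bar{\tau}'$ is simply the definition of strict complementarity.

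The only genuinely delicate step is the three-way sign bookkeeping in Cond.~2: the uniform positivity ``$>0$'' of Property~2 in Section~\ref{optin} is replaced by three regimes (positivity on $B$, positivity of the $z$-components on $B^{+}$, negativity of the $z$-components on $B^{-}$), and one must verify that the pseudo-inverse representation and the equivalence between $P(\triangle A,\triangle b)$ and $P_{\lambda}(\mathbf{\triangle A})$ respect this refinement. Once this is checked, the remainder of the proof is a direct translation of the principal-problem argument to the augmented data $(\mathbf{A},\mathbf{b},\mathbf{c})$.
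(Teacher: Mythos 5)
Your proposal is correct and follows essentially the same route as the paper's own (much terser) proof: Cond.~1 as the prerequisite for the other two, Cond.~2 as primal strict feasibility with the three-way sign bookkeeping over $B$, $B^{+}$, $B^{-}$, and Cond.~3 as dual strict feasibility, reduced from $\bar{\tau}'$ to $\tau'$ because the last $m$ components of $\mathbf{s}$ vanish (via the constraint $\lambda y - w = 0$). Your write-up merely fills in the feasibility and complementarity details that the paper dismisses as immediate, so no further comparison is needed.
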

\begin{proof}
Observe that satisfaction of Cond.~1 is  necessary to others, since the pseudo-inverse of $\mathbf{A}_{\bar{\tau}}(\lambda )$ appears in them. Moreover,    Cond.s 2 and 3, respectively, are the strictly feasibility conditions of the solution for Problems $ P_{\lambda }( \mathbf{\triangle A}) $ and $ D_{\lambda }( \mathbf{\triangle  A}).$ Since for $\lambda  $, the dual feasibility condition of  $ D_{\lambda } (\mathbf{\triangle  A})$   only associates  to  those variables with indices in ${\tau}'(i)$ ,  $\ i \in \lbrace 1, \ldots , n-l \rbrace $, it  suffices to consider the indices of ${\tau }'$ instead of ${\bar{\tau }}^{'}$. Recall that ${\tau }'(i)$  corresponds to the positive variables $s_{i}$ in Problem $D( \triangle  A , \triangle b)$ where $\ i \in \lbrace 1, \ldots , n-l \rbrace $.
Respecting the concept of induced optimal solution and  strictly feasibility of primal and dual problems, validity of the statement   is immediate.
\end{proof}

Now,  having the induced optimal partition $\bar{\pi}(\lambda _0) =({\bar{B}}(\lambda _0),{\bar{N}}(\lambda _0))$, for a given $\lambda \in \Lambda _0,$ the aim of induced optimal partition invariancy is to find the region $\Lambda_{\bar{\pi }}(\lambda _0)\subseteq \Lambda$, where for all  $\lambda  \in \Lambda_{\bar{\pi }}(\lambda _0)$, the induced optimal partition of the associated problem is identical with $\bar{\pi }(\lambda _0)$. Recall that this invariancy region contains all such $t $'s at which Cond.s 1-3  in Theorem \ref{thg} hold.

Let us define the notion of \emph{change point} to distinguish it from the transition point.
Due to the assumption that ${\triangle  \mathbf{A}_{\bar{\tau}}}$ is an admissible direction (See page~\pageref{paadd}), the induced optimal partition invariancy region, is an interval.
At the endpoints of this interval,  induced optimal partitions are changed provided that the Problem $ P_{\lambda }( \mathbf{\triangle A}) $ is feasible and has optimal solutions at these points. Otherwise, there is no induced optimal partition at these points.
Variation of induced partition means that some indices interchange between  $\bar{B}$ and $\bar{N}$ when the parameter is replaced by one of the endpoints of the interval. More clearly, this transition may happen between  $B$ and $N$, or between $B^+$, $B^-$ and $N^0.$ In the first case, the point is referred to as a \emph{transition point}, and in the second case, it is called a \emph{change point} \label{chpoint}.
Note that when a parameter value is a change point only, indices of free variable interchange between their index sets only. Since they are absent in the objective function $Z(\lambda )$,  this variation does not affect its representation.
Therefore, at a transition point, the representation of the optimal value function changes, and it fails to have the first derivative. The representation of the optimal value function on the neighborhoods of the change point does not change when it is not a transition point, simultaneously.

Let us illustrate the importance of a change point in practice via a simple example.
Consider manufacturing of $n$ products, using $m$ sources, and $b_{i}$ denotes the amount of available value of source $i.$ A unit of product $j$ needs $a_{ij}$ amount of the source ${i}.$ For instance, let $b_{1}$ is the available machine time in this production plan, and $a_{1j}$ is the time necessary to produce one unit of item $j$. Therefore, the corresponding constraint could be as
$$a_{11} x_1+\cdots + a_{1n} x_n \leq b_{1}$$
where $x_{j}$ is the production level of $j$. Without loss of generality, one may assume that variation in production time affects its quality. Thus, positive $\triangle a_{1j},$ as the increase in the production time of $j$ increases the quality of this product and vice versa. This variation may necessitate a change on $b_{1}$, but this is not the only reason.
Consequently, the corresponding parametric form of this constraint is
\begin{equation*}\label{p1eqp}
(a_{11}+ \triangle a_{11}\lambda )x_1+\cdots + (a_{1n}+ \triangle a_{1n}\lambda ) x_n+ x_{n+1} = b_{1}+ \triangle b_{1}\lambda ,
\end{equation*}
where $x_{n+1}$ is an slack variable. Equivalently,
\begin{equation}\label{eqchp}\nonumber
\begin{array}{rrrrl}
  a_{11} x_1 +\cdots +&a_{1n} x_n+&x_{n+1}+&\lambda z&=b_{1}\\
  \triangle a_{11} x_1 +\cdots  +&\triangle  a_{1n} x_n+&-&z&=\triangle b_{1}
\end{array}
\end{equation}
where $\lambda $ could be considered as the degree of quality.
 Without loss of generality, let $\lambda =0$ be a change point of the corresponding linear optimization problem. This means that for $\lambda =0, z=o.$ Further, let
 for some $\lambda >0, \ z=\triangle a_{11} x_1 +\cdots +\triangle  a_{1n} x_n-\triangle b_{1}>0, $ and for some $\lambda <0, \ z=\triangle a_{11} x_1  +\cdots+\triangle  a_{1n} x_n-\triangle b_{1}<0.$
 This means that increasing (decreasing)  of the quality degree $\lambda $ implies in growth (decline) of $z.$ Here, $z$ is the difference between the available variation in corresponding total production time  $(\triangle b_{1})$ and the necessary time $\triangle a_{11} x_1 +\cdots +\triangle  a_{1n} x_n$ to change the quality of products. More clearly, the amount of variation in total production time  for the current quality, $(\triangle b_{1}),$ equals to the variation of production time of every product at an optimal production plan. Increasing of the quality $(\lambda >0),$ implies in slack in total considered extra time $\triangle b_{1}$ versus the necessary time $\triangle a_{11} x_1  +\cdots +\triangle  a_{1n} x_n$ in an optimal solution at $\lambda >0$ (i.e. $z>0$). On the other hand, decreasing the quality $ (\lambda <0),$ leaves some spare time from $\triangle a_{11} x_1  +\cdots +\triangle  a_{1n} x_n$ that is more than the reduced available time $\triangle b_{1}$ (i.e. $z<0$). This information would guide the manager to adjust the production plan efficiency.

\subsection{Identifying an induced invariancy interval }
Note that Cond.s 1-3 in Theorem  \ref{thg} quids us to derive some computational tools that lead to identifying an induced invariancy interval.
Realization theory helps us to translate Property 1-3 for Problem $P_{\lambda }(\mathbf{\triangle  A})$ to computational forms in terms of eigenvalues of some matrices. In the sequel,  these conditions are translated in terms of eigenvalues of specific matrices.

Consider Properties 1-3 (See Page \pageref{proper13}). For the principal Problem $P_{\lambda _{0}}(\triangle A)$ with optimal partition $\pi (\lambda _0)=({B}(\lambda _0),{N}(\lambda _0)),$ at $\lambda _0$ the approach in \cite{GM15} needs to calculate ${A}^{\dagger}_{\tau }(\lambda ).$
To determine this matrix, we consider three possibilities. First let $m<l,$ with $l$ as defined in \eqref{taus}. In this case ${A}_{\tau }(\lambda  _{0}){A}^{\dagger}_{\tau }(\lambda  _{0})=I_{m},$
then
\begin{equation}\label{1fr}
{A}^{\dagger}_{\tau}(\lambda  ) =  ({A}_{\tau}(\lambda  _{0})+(\lambda -\lambda  _{0}){\triangle  {A}_{\tau}})^{\dagger} =(I_{l}+(\lambda  -\lambda  _{0}) {A}^{\dagger}_{\tau}(\lambda  _{0}) {\triangle  {A}_{\tau}})^{-1}  {A}^{\dagger}_{\tau}(\lambda  _{0}).
 \end{equation}
 When $m>l,$ then ${A}^{\dagger}_{\tau }(\lambda  _{0}){A}_{\tau }(\lambda  _{0})=I_{l},$ and therefore
 \begin{equation}\label{2fr}
{A}^{\dagger}_{\tau}(\lambda  ) ={A}^{\dagger}_{\tau}(\lambda  _{0})(I_{m}+(\lambda  -\lambda  _{0})  {\triangle  {A}_{\tau}}{A}^{\dagger}_{\tau}(\lambda  _{0}))^{-1}.
 \end{equation}
 Finally for $m=l,$  ${A}_{\tau}(\lambda  _{0})$ has full row and full column ranks and Moore-Penrose inverse is reduced to the standard inverse, i.e., ${A}^{\dagger}_{\tau}(\lambda  ) ={A}^{-1}_{\tau}(\lambda  ),$ then
 \begin{equation}\label{3fr}
{A}^{\dagger}_{\tau}(\lambda ) ={A}^{-1}_{\tau}(\lambda ) =(I_{m}+(\lambda -\lambda _{0}) {A}^{-1}_{\tau}(\lambda  _{0}) {\triangle  {A}_{\tau}})^{-1}  {A}^{-1}_{\tau}(\lambda  _{0}).
 \end{equation}

Note that applying \eqref{1fr} or \eqref{2fr} leads to identical nonzero eigenvalues of some matrices we need in our procedure. More clearly, using \eqref{1fr} leads to identify eigenvalues of multiplication of two matrices, say $TQ,$ while using \eqref{2fr} needs to do the same for $QT.$
Observe that \label{QR} for two matrices $Q $ and $T$ with appropriate sizes, nonzero eigenvalues of $TQ$ and $QT$ are identical, and the extra ones are zero.
Therefore, depending on the size of $TQ$ and $QT,$ more-cost-effective calculation suggests to apply the one with less dimension. In our problem in question, $l $  in the principal problem is replaced by  $\bar{l} $ and  $m $ by  $2m .$ Thus without loss or generality, we assume that $2m \leq \bar{l} $  and construct our methodology based on \eqref{1fr}.

 The following theorem provides an inequality, which is identical to Cond. 1, the existence of the pseudo-inverse of~$\mathbf{A}_{\bar{\tau }}(\lambda  )$.
\begin{theorem}\label{th1c}
For a given $\lambda _0$, let $\bar{\tau}$ correspond to the induced optimal partition $\bar{\pi }(\lambda _0)$ of Problem $P_{\lambda _{0}}(\mathbf{\triangle  A}).$ Then, for all $\lambda  \in \Lambda_{\bar{\pi}(\lambda _{0})} $ with  $\lambda \neq \lambda _{0} $,
 $\mathbf{A}_{\bar{\tau}}(\lambda  )= \mathbf{A}_{\bar{\tau}} +\lambda  \mathbf{\triangle A}_{\bar{\tau}}$  has pseudo-inverse if and only if
\begin{equation}\label{yt}
1+ \alpha_{j} (\lambda  -\lambda _{0}) \neq 0,\ j = 1,\ldots , \bar{l},
\end{equation}
where the nonzero values of $\alpha_{1},\ldots ,  \alpha_{\bar{l}}$ are nonzero eigenvalues of
$ \mathbf{\triangle  A}_{\bar{\tau}}\mathbf{A}^{\dagger}_{\bar{\tau}}(\lambda _{0}).$
\end{theorem}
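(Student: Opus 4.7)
The plan is to reduce the existence of the pseudo-inverse of $\mathbf{A}_{\bar{\tau}}(\lambda)$ to the invertibility of a single $\bar{l}\times \bar{l}$ matrix, and then translate that invertibility into an eigenvalue condition using realization theory together with the ``$TQ$ vs.\ $QT$'' observation on page~\pageref{QR}.

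First, I would invoke the formula \eqref{1fr}, which is applicable under the standing assumption $2m \leq \bar{l}$ that was justified just before the statement. Writing $\lambda = \lambda_0 + (\lambda-\lambda_0)$, the factorization yields
\begin{equation*}
\mathbf{A}_{\bar{\tau}}(\lambda)^{\dagger} \;=\; \bigl(I_{\bar{l}} + (\lambda-\lambda_0)\,\mathbf{A}^{\dagger}_{\bar{\tau}}(\lambda_0)\,\mathbf{\triangle A}_{\bar{\tau}}\bigr)^{-1}\mathbf{A}^{\dagger}_{\bar{\tau}}(\lambda_0),
\end{equation*}
so the pseudo-inverse of the perturbed matrix (in the relevant rank-preserving sense of \eqref{1fr}) exists if and only if the $\bar{l}\times \bar{l}$ matrix $I_{\bar{l}} + (\lambda-\lambda_0)\,\mathbf{A}^{\dagger}_{\bar{\tau}}(\lambda_0)\,\mathbf{\triangle A}_{\bar{\tau}}$ is nonsingular. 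The ``only if'' direction requires a verification that the four Moore--Penrose identities \eqref{0d}--\eqref{3d} actually fail when this matrix is singular; I would handle this by showing that the rank of $\mathbf{A}_{\bar{\tau}}(\lambda)$ drops precisely on the singular locus, so no valid rank-preserving representation of the form \eqref{1fr} can exist.

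Next I would pass from invertibility to eigenvalues. The matrix above is nonsingular exactly when $-1/(\lambda-\lambda_0)$ is not an eigenvalue of $M:=\mathbf{A}^{\dagger}_{\bar{\tau}}(\lambda_0)\,\mathbf{\triangle A}_{\bar{\tau}}$, i.e.\ when $1 + \beta(\lambda-\lambda_0)\neq 0$ for every eigenvalue $\beta$ of $M$. Zero eigenvalues of $M$ contribute $1\neq 0$ trivially and may be discarded, so the condition reduces to the nonzero spectrum of $M$. Finally, by the elementary fact recalled on page~\pageref{QR}, the nonzero eigenvalues of $M = \mathbf{A}^{\dagger}_{\bar{\tau}}(\lambda_0)\,\mathbf{\triangle A}_{\bar{\tau}}$ coincide with those of $\mathbf{\triangle A}_{\bar{\tau}}\,\mathbf{A}^{\dagger}_{\bar{\tau}}(\lambda_0)$, which by definition are the nonzero $\alpha_j$. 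This yields \eqref{yt}.

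The main obstacle, and the only nontrivial point, will be the ``only if'' half: one must argue that when the bracketed matrix is singular, $\mathbf{A}_{\bar{\tau}}(\lambda)$ genuinely loses the structural property needed for the invariancy analysis (either full row rank or full column rank, depending on which of \eqref{1fr}--\eqref{3fr} applies), so that the induced optimal partition condition Cond.~1 of Theorem~\ref{thg} breaks. Everything else is a clean chain of equivalences: factorization $\Rightarrow$ invertibility of an $\bar{l}\times \bar{l}$ matrix $\Rightarrow$ eigenvalue condition on $M$ $\Rightarrow$ eigenvalue condition on $\mathbf{\triangle A}_{\bar{\tau}}\,\mathbf{A}^{\dagger}_{\bar{\tau}}(\lambda_0)$ via the $TQ$--$QT$ swap.
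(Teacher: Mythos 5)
Your proposal is correct and follows essentially the same route as the paper's proof: the factorization \eqref{1fr}, the reduction of existence to invertibility of $I_{\bar{l}}+(\lambda-\lambda_{0})\mathbf{A}^{\dagger}_{\bar{\tau}}(\lambda_{0})\mathbf{\triangle A}_{\bar{\tau}}$ (stated in the paper as a resolvent-set condition), and the passage to the nonzero eigenvalues of $\mathbf{\triangle A}_{\bar{\tau}}\mathbf{A}^{\dagger}_{\bar{\tau}}(\lambda_{0})$ via the $TQ$--$QT$ eigenvalue swap. The only divergence is that you flag and propose to close the ``only if'' subtlety---since a Moore--Penrose inverse exists for every real matrix, the theorem must indeed be read in the rank-preserving sense in which \eqref{1fr} is valid---whereas the paper simply asserts the equivalence, so your extra verification is a refinement of, not a departure from, its argument.
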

\begin{proof}
For $\lambda \neq \lambda _{0} $ and  considering \eqref{1fr}, we have
\begin{equation*}\label{1frj}
\begin{array}{rcl}
\mathbf{A}^{\dagger}_{\bar{\tau} }(\lambda  ) &=&(I_{\bar{l}}+(\lambda  -\lambda  _{0}) \mathbf{A}^{\dagger}_{\bar{\tau}}(\lambda  _{0}) {\triangle  \mathbf{A}_{\bar{\tau}}})^{-1}  \mathbf{A}^{\dagger}_{\bar{\tau}}(\lambda  _{0})\\[2mm]
&=&\dfrac{1}{\lambda  -\lambda  _{0}}(\dfrac{1}{\lambda  -\lambda  _{0}} I_{\bar{l}}+ \mathbf{A}^{\dagger}_{\bar{\tau}}(\lambda  _{0}) {\triangle  \mathbf{A}_{\bar{\tau}}})^{-1}  \mathbf{A}^{\dagger}_{\bar{\tau}}(\lambda  _{0}).
\end{array}
 \end{equation*}
Since $\mathbf{A}^{\dagger}_{\bar{\tau}}(\lambda  _{0})$ exists, then $\mathbf{A}^{\dagger}_{\bar{\tau}}(\lambda  )$ exists if and only if
\begin{equation}\label{rad}
\frac{1}{\lambda _{0} - \lambda } \not \in \rho({{\mathbf{A}^{\dagger}_{\bar{\tau}}}(\lambda _{0})}{\triangle  \mathbf{A}_{\bar{\tau}}}),
\end{equation}
where for a square matrix $D$, $\rho (D) \subset \mathbb{C}$ denotes \label{2nm} the resolvent set of $D$ consisting of those complex numbers $\mu $ for which $\mu I - D $ is invertible~\cite{GM15}.
Thus, (\ref{rad}) is identical to \eqref{yt}
where the nonzero values of $\alpha_{1},\ldots ,  \alpha_{\bar{l}}$ are nonzero eigenvalues of
$ \mathbf{\triangle  A}_{\bar{\tau}}\mathbf{A}^{\dagger}_{\bar{\tau}}(\lambda _{0}).$
The proof is complete.
\end{proof}
\begin{rem} \label{rm1}
Recall that for  $2m > \bar{l},$ the size of $\mathbf{\triangle  A}_{\bar{\tau}} \mathbf{A}^{\dagger}_{\bar{\tau}}(\lambda _{0})  $ is greater than the size of  $\mathbf{A}^{\dagger}_{\bar{\tau}}(\lambda _{0}) \mathbf{\triangle  A}_{\bar{\tau}} .$ Thus, it is more cost-effective to consider
 $\alpha_{1},\ldots ,  \alpha_{\bar{l}}$ as the eigenvalues of
$\mathbf{A}^{\dagger}_{\bar{\tau}}(\lambda _{0}) \mathbf{\triangle  A}_{\bar{\tau}} .$
\end{rem}
\begin{corollar}\label{jhy}
In the special case, let $P_{\lambda _{0}}(\mathbf{\triangle  A})$ have a unique basic optimal solution, i.e., $\bar{l}=2m$ (See \eqref{3fr}). Then, for all $\lambda  \in \Lambda_{\bar{\pi}(\lambda _{0})} ,$
$\mathbf{A}_{\bar{\tau}}(\lambda  )$ is invertible if and only if~ \eqref{yt} holds, and ${\alpha }_{1},\ldots ,  {\alpha }_{2m}$ are eigenvalues of ${\mathbf{A}_{\bar{\tau}}^{-1}(\lambda _{0})} \triangle  \mathbf{A}_{\bar{\tau}}.$
\end{corollar}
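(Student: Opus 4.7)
The plan is to specialize Theorem~\ref{th1c} to the square, nondegenerate case. When $\bar{l}=2m$, the induced basic index set has cardinality equal to the number of constraints of $P_{\lambda}(\mathbf{\triangle A})$, so $\mathbf{A}_{\bar{\tau}}(\lambda_0)$ is a $2m \times 2m$ matrix. A unique basic optimal solution forces this submatrix to be full rank, which (as noted in the discussion preceding equation~\eqref{3fr}) makes its Moore-Penrose pseudo-inverse collapse to the ordinary inverse. Consequently, ``$\mathbf{A}_{\bar{\tau}}(\lambda)$ has a pseudo-inverse'' and ``$\mathbf{A}_{\bar{\tau}}(\lambda)$ is invertible'' become the same assertion, and Cond.~1 of Theorem~\ref{thg} reduces to classical invertibility.

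Next, I would mirror the computation in the proof of Theorem~\ref{th1c}, but starting from formula~\eqref{3fr} rather than \eqref{1fr}. Writing
\begin{equation*}
\mathbf{A}^{-1}_{\bar{\tau}}(\lambda) = \bigl(I_{2m} + (\lambda - \lambda_0)\,\mathbf{A}^{-1}_{\bar{\tau}}(\lambda_0)\,\triangle\mathbf{A}_{\bar{\tau}}\bigr)^{-1} \mathbf{A}^{-1}_{\bar{\tau}}(\lambda_0),
\end{equation*}
and, for $\lambda \neq \lambda_0$, factoring $\tfrac{1}{\lambda-\lambda_0}$ out of the bracketed matrix, one concludes that $\mathbf{A}_{\bar{\tau}}(\lambda)$ is invertible precisely when $\tfrac{1}{\lambda_0-\lambda}$ lies in the resolvent set of $\mathbf{A}^{-1}_{\bar{\tau}}(\lambda_0)\,\triangle\mathbf{A}_{\bar{\tau}}$. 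Equivalently, no eigenvalue $\alpha_j$ of this square matrix may satisfy $1+\alpha_j(\lambda-\lambda_0)=0$, which is exactly condition~\eqref{yt}.

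The only subtlety to verify is the switch of factor order from $\mathbf{\triangle A}_{\bar{\tau}}\mathbf{A}^{\dagger}_{\bar{\tau}}(\lambda_0)$ in the theorem to $\mathbf{A}^{-1}_{\bar{\tau}}(\lambda_0)\,\triangle\mathbf{A}_{\bar{\tau}}$ in the corollary. This is justified by the observation recalled on page~\pageref{QR}: for conformable $T,Q$, the products $TQ$ and $QT$ share all nonzero eigenvalues. Since in the present square case $\bar{l}=2m$ the two products have the same size $2m\times 2m$, their full spectra coincide; so it is immaterial which factorization one picks, and formula~\eqref{3fr} naturally favors the form with $\mathbf{A}^{-1}_{\bar{\tau}}(\lambda_0)$ on the left. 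The main (modest) obstacle is really bookkeeping: confirming that ``unique basic optimal solution'' genuinely corresponds to $\bar{l}=2m$ in the transformed problem, so that $\mathbf{A}_{\bar{\tau}}(\lambda_0)$ is both square and nonsingular and equation~\eqref{3fr} is indeed the applicable identity. Once this is settled, the corollary is an immediate specialization of Theorem~\ref{th1c}.
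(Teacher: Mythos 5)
Your proposal is correct and matches the paper's intended argument: the paper states this corollary without a separate proof precisely because it is the immediate specialization of Theorem~\ref{th1c} to the square case via \eqref{3fr}, where the pseudo-inverse collapses to the ordinary inverse, together with the $TQ$ versus $QT$ eigenvalue observation recalled on page~\pageref{QR} to justify the reordered product $\mathbf{A}_{\bar{\tau}}^{-1}(\lambda_{0})\,\triangle\mathbf{A}_{\bar{\tau}}$. Your resolvent-set formulation (requiring $\tfrac{1}{\lambda_{0}-\lambda}$ to \emph{lie in} the resolvent set) is in fact the correct reading of the condition that the paper's proof of Theorem~\ref{th1c} states with an inverted membership sign.
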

The following theorem translates  Cond. 2 in terms of eigenvalues of some further specific matrices.
\begin{theorem}\label{th2c}
Let $\bar{\tau}$ correspond to the induced optimal partition $\bar{\pi}(\lambda _0)$ of Problems $P_{\lambda _{0}}(\mathbf{\triangle  A})$ and $D_{\lambda _{0}}(\mathbf{\triangle  A}).$
Then, for  $ 1\leq q \leq \bar{l}$, and $q \in {B} \cup {B^{+}}, \ ({\mathbf{x}}_{\bar{\tau}}(\lambda  ))_{q}> 0$ is identical with
   \begin{equation} \label{gesikt1}
 \displaystyle \prod_{j =1}^{\bar{l}} \frac{1+(\lambda  -\lambda _{0})\beta _{q,j}}{1+(\lambda  -\lambda _{0}) {\alpha }_j}
\left\{ \begin{array}{ll}
\geq 1, & \mbox{if}  \  (\lambda  -\lambda _{0}) \geq 0, \\
\leq 1, & \mbox{if}  \  (\lambda  -\lambda _{0}) \leq 0.\\
\end{array}
\right.
  \end{equation}
When $q \in {B^{-}}, \  ({\mathbf{x}}_{\bar{\tau}}(\lambda ))_{q}< 0 $ is identical with
  \begin{equation} \label{gesikt2}
 \displaystyle \prod_{j =1}^{\bar{l}} \frac{1+(\lambda  -\lambda _{0})\beta _{q,j}}{1+(\lambda  -\lambda _{0}) {\alpha }_j}
\left\{ \begin{array}{ll}
\leq 1, & \mbox{if}  \  (\lambda  -\lambda _{0}) \geq 0, \\
\geq 1, & \mbox{if}  \  (\lambda  -\lambda _{0}) \leq 0,\\
\end{array}
\right.
  \end{equation}
where nonzero values of $\beta _{q,1}, \ldots ,\beta _{q,\bar{l}}$ are nonzero eigenvalues of the matrix $({\triangle  \mathbf{A}}_{\bar{\tau}}+ \mathbf{b}  e^{T}_{q}) {\mathbf{A}^{\dagger}_{\bar{\tau}}(\lambda _{0})},$ and $\alpha _{1}, \ldots ,\alpha _{\bar{l}}$ are nonzero eigenvalues of
$ \mathbf{\triangle  A}_{\bar{\tau}}\mathbf{A}^{\dagger}_{\bar{\tau}}(\lambda _{0}).$
\end{theorem}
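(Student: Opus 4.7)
The plan is to reduce the sign condition on $(\mathbf{x}_{\bar{\tau}}(\lambda))_q = e_q^T \mathbf{A}_{\bar{\tau}}^\dagger(\lambda)\mathbf{b}$ to a scalar rational identity and then extract the product factorization via the realization formula (\ref{1}). First I would set $\mu = \lambda - \lambda_0$, $M = \mathbf{A}_{\bar{\tau}}^\dagger(\lambda_0)\triangle\mathbf{A}_{\bar{\tau}}$, and $v = \mathbf{A}_{\bar{\tau}}^\dagger(\lambda_0)\mathbf{b}$, and invoke formula (\ref{1fr}) (applied in the $P_{\lambda}(\mathbf{\triangle A})$ setting, with $l$ replaced by $\bar{l}$) to write
\begin{equation*}
  (\mathbf{x}_{\bar{\tau}}(\lambda))_q \;=\; e_q^T (I_{\bar{l}} + \mu M)^{-1} v .
\end{equation*}
The existence of $(I_{\bar{l}} + \mu M)^{-1}$ on $\Lambda_{\bar{\pi}(\lambda_0)}$ is already supplied by Theorem~\ref{th1c}, so this rewriting is legitimate.

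Next, I multiply by $\mu$ and add $1$ so the expression matches the left-hand side of the realization formula (\ref{1}):
\begin{equation*}
  1 + \mu\,(\mathbf{x}_{\bar{\tau}}(\lambda))_q \;=\; 1 + \mu\, e_q^T (I_{\bar{l}} + \mu M)^{-1} v .
\end{equation*}
Taking $C = M$, $b = v$, $c = e_q$ in (\ref{1}), the right-hand side factors as $\prod_{j=1}^{\bar{l}}(1+\mu\beta_{q,j})/(1+\mu\alpha_j)$, where the $\alpha_j$ are the eigenvalues of $M$ and the $\beta_{q,j}$ are those of $M + v e_q^T$. Now I would use the identity $M + v e_q^T = \mathbf{A}_{\bar{\tau}}^\dagger(\lambda_0)(\triangle\mathbf{A}_{\bar{\tau}} + \mathbf{b} e_q^T)$ together with the fact recorded on page~\pageref{QR} that $PQ$ and $QP$ share the same nonzero spectrum, to replace $M + v e_q^T$ by $(\triangle\mathbf{A}_{\bar{\tau}} + \mathbf{b} e_q^T)\mathbf{A}_{\bar{\tau}}^\dagger(\lambda_0)$ and $M$ by $\triangle\mathbf{A}_{\bar{\tau}} \mathbf{A}_{\bar{\tau}}^\dagger(\lambda_0)$; the surplus zero eigenvalues contribute factors $(1+\mu\cdot 0)=1$ to both numerator and denominator and therefore do not alter the product.

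The last step is a short sign-chase. For $q \in B \cup B^+$ the target $(\mathbf{x}_{\bar{\tau}}(\lambda))_q > 0$ is equivalent to $\mu(\mathbf{x}_{\bar{\tau}}(\lambda))_q$ carrying the same sign as $\mu$, that is, $1 + \mu(\mathbf{x}_{\bar{\tau}}(\lambda))_q \geq 1$ when $\mu \geq 0$ and $\leq 1$ when $\mu \leq 0$; substituting the product identity yields exactly (\ref{gesikt1}). For $q \in B^-$ the desired sign is reversed, so the two inequalities swap and one obtains (\ref{gesikt2}). Equality to $1$ corresponds to $(\mathbf{x}_{\bar{\tau}}(\lambda))_q = 0$, which is why the inequalities are non-strict and correctly pick up the endpoints of the invariancy interval, where transition or change points sit.

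The step I expect to require the most care is the eigenvalue bookkeeping in the middle paragraph. Depending on the relationship between $2m$ and $\bar{l}$, the matrix $M$ and the matrix $\triangle\mathbf{A}_{\bar{\tau}} \mathbf{A}_{\bar{\tau}}^\dagger(\lambda_0)$ named in the statement have different sizes, and one has to argue (via the $PQ$/$QP$ identity and Remark~\ref{rm1}) that the stated matrices carry exactly the $\alpha_j$'s and $\beta_{q,j}$'s needed in the realization product, while any spurious zero eigenvalues are harmless. Once this identification is pinned down, the rest of the proof is essentially bookkeeping.
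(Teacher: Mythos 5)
Your proposal is correct and is essentially the paper's own proof: both rewrite the sign condition as a statement about $1+(\lambda-\lambda_0)\,e_q^T\mathbf{A}^{\dagger}_{\bar{\tau}}(\lambda)\mathbf{b}$, substitute \eqref{1fr}, apply the realization formula \eqref{1} with $C=\mathbf{A}^{\dagger}_{\bar{\tau}}(\lambda_0)\triangle\mathbf{A}_{\bar{\tau}}$, $b=\mathbf{A}^{\dagger}_{\bar{\tau}}(\lambda_0)\mathbf{b}$, $c=e_q$, transfer the nonzero eigenvalues to $(\triangle\mathbf{A}_{\bar{\tau}}+\mathbf{b}e_q^T)\mathbf{A}^{\dagger}_{\bar{\tau}}(\lambda_0)$ and $\triangle\mathbf{A}_{\bar{\tau}}\mathbf{A}^{\dagger}_{\bar{\tau}}(\lambda_0)$ via the $TQ$/$QT$ identity from page~\pageref{QR}, and finish with the same case split on the sign of $\lambda-\lambda_0$. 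Your explicit bookkeeping of the surplus zero eigenvalues (unit factors in both numerator and denominator) only spells out what the paper leaves implicit.
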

\begin{proof}
We prove \eqref{gesikt1}, the proof of \eqref{gesikt2} goes similarly, and we omit it.
 For $ 1\leq q \leq \bar{l}$ and $q \in {B} \cup {B^{+}},$ the inequality $({\mathbf{x}}_{\bar{\tau}}(\lambda  ))_{q}= e_{q}^{T} \mathbf{A}_{\bar{\tau}}^{\dagger}(\lambda  ) \mathbf{b}> 0$ holds if and only if
$1+(\lambda  -\lambda _{0}) e_{q}^{T} \mathbf{A}_{\bar{\tau}}^{\dagger}(\lambda  ) \mathbf{b}\geq 1$ when $\lambda  -\lambda _{0} \geq 0,$ and $1+(\lambda  -\lambda _{0}) e_{q}^{T} \mathbf{A}_{\bar{\tau}}^{\dagger}(\lambda  ) \mathbf{b}\leq 1$  when $\lambda  -\lambda _{0} \leq 0.$ Using \eqref{1fr}, we have
 \begin{equation}\label{Cond.2t}
 \begin{array}{l}
1+(\lambda  -\lambda _{0}) e_{q}^{T} \mathbf{A}_{\bar{\tau}}^{\dagger}(\lambda  ) \mathbf{b} \\\hspace{1cm}
=1+(\lambda  -\lambda _{0}) e_{q}^{T}(I_{\bar{l}}+(\lambda  -\lambda _{0})\mathbf{A}_{\bar{\tau}}^{\dagger}(\lambda  _{0}) {\triangle  \mathbf{A}}_{\bar{\tau}})^{-1} \mathbf{A}_{\bar{\tau}}^{\dagger}(\lambda  _{0}) \mathbf{b}.
 \end{array}
 \end{equation}
Based on the realization theory and considering
 \begin{equation*}\label{realth}
 \begin{array}{rcl}
  \lambda  &:=&\lambda  -\lambda _{0}, ~~c^{T} := {e^{T}_{q}},\\
   C&:=&\mathbf{A}^{\dagger}_{\bar{\tau}}(\lambda _{0})  \triangle  \mathbf{A}_{\bar{\tau}},~~ b:=\mathbf{A}^{\dagger}_{\bar{\tau}}(\lambda _{0}) \mathbf{b },\\
C^{\times} &:=&C+bc^{T} :=\mathbf{A}^{\dagger}_{\bar{\tau}}(\lambda _{0})({\triangle  \mathbf{A}_{\bar{\tau}}}+ \mathbf{ b}{e^{T}_{q}}),
 \end{array}
 \end{equation*}
Eq. \eqref{Cond.2t} can be converted to
  \begin{equation*} \label{gesik}
 \displaystyle \prod_{j =1}^{\bar{l}} \frac{1+(\lambda  -\lambda _{0})\beta _{q,j}}{1+(\lambda  -\lambda _{0}) {\alpha }_j},
  \end{equation*}
where  the nonzero values of $\beta _{q,1}, \ldots ,\beta _{q,\bar{l}}$ are nonzero eigenvalues of $({\triangle  \mathbf{A}}_{\bar{\tau}}+ \mathbf{b}  e^{T}_{q}) {\mathbf{A}^{\dagger}_{\bar{\tau}}(\lambda _{0})},$ and $\alpha _{1}, \ldots ,\alpha _{\bar{l}}$ are nonzero eigenvalues of
$ \mathbf{\triangle  A}_{\bar{\tau}}\mathbf{A}^{\dagger}_{\bar{\tau}}(\lambda _{0}).$
This completes the proof.
\end{proof}
\begin{rem} \label{rm2g}
In addition to Remark \ref{rm1}, observe that for $2m > \bar{l},$ the size of   $({\triangle  \mathbf{A}}_{\bar{\tau}}+ \mathbf{b}  e^{T}_{q}){\mathbf{A}^{\dagger}_{\bar{\tau}}(\lambda _{0})},$ is greater than the size of ${\mathbf{A}^{\dagger}_{\bar{\tau}}(\lambda _{0})} ({\triangle  \mathbf{A}}_{\bar{\tau}}+ \mathbf{b}  e^{T}_{q}).$ Thus, one has better to consider
$\beta _{q,1}, \ldots ,\beta _{q,\bar{l}}$ as the eigenvalues of ${\mathbf{A}^{\dagger}_{\bar{\tau}}(\lambda _{0})} ({\triangle  \mathbf{A}}_{\bar{\tau}}+ \mathbf{b}  e^{T}_{q})$.
\end{rem}

Analogous to Theorem \ref{th2c}, next theorem translates  Cond. 3   in terms of eigenvalues of some other specific matrices.
{ Recall that in this theorem functions $\tau $ and ${\tau }'$ are defined as \eqref{taus} for Problem $P({\triangle  A},{\triangle  b})$.  }
\begin{theorem}\label{then}
Let $\bar{\pi }(\lambda  _{0}) $ be the induced optimal partition of Problems $P_{\lambda _{0}}(\mathbf{\triangle  A})$ and $D_{\lambda _{0}}(\mathbf{\triangle  A})$.
 For $p \in {\rm Range}({{\tau  }'}), $ the inequality $c_{p} -c_{\tau }^{T} A^{\dagger}_{\tau} (\lambda ) A_{p}(\lambda  ) >0$
 is translated as
 \begin{equation}\nonumber
\displaystyle \prod_{j=1}^{l} \frac{1+(\lambda  -\lambda _{0}) \gamma _{p,j}}{1+(\lambda  -\lambda _{0}) {\alpha ^{'}}_j} +\frac{1}{\lambda  -\lambda _{0}} \displaystyle \prod_{j=1}^{l} \frac{1+(\lambda  -\lambda _{0}) \delta _{p,j}}{1+(\lambda  -\lambda _{0}) \alpha ^{'}_j} < 1 + \frac{1}{\lambda  -\lambda _{0}} +{c}_{p},
 \end{equation}
where  the nonzero values of ${\alpha ^{'}_{1}}, \ldots ,{\alpha ^{'}_{l}},$
$\gamma _{p,1}, \ldots ,\gamma _{p,l}$
and $ \delta _{p,1}, \ldots ,\delta _{p,l}$ are nonzero eigenvalues of the matrices  $\triangle  A_{\tau} A^{\dagger}_{\tau}(\lambda _{0}), $  $ (\triangle  A_{\tau}+\triangle  A_{p}{c}_{\tau}^T) A^{\dagger}_{\tau }(\lambda _{0}),$ and $(\triangle  A_{\tau}+(A_{p}+\lambda _{0} \triangle  A_{p}) c_{\tau }^T ) A^{\dagger}_{\tau }(\lambda _{0}),$ respectively.
\end{theorem}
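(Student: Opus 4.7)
The plan follows the blueprint of Theorem~\ref{th2c} but requires a two-fold decomposition because $A_{p}(\lambda)=A_{p}+\lambda\triangle A_{p}$ itself depends on $\lambda$. Setting $\mu:=\lambda-\lambda_{0}$ and using the pseudo-inverse update \eqref{1fr}, I first rewrite
\[
c_{\tau}^{T}A_{\tau}^{\dagger}(\lambda)A_{p}(\lambda)=c_{\tau}^{T}(I_{l}+\mu A_{\tau}^{\dagger}(\lambda_{0})\triangle A_{\tau})^{-1}A_{\tau}^{\dagger}(\lambda_{0})\bigl(A_{p}(\lambda_{0})+\mu\triangle A_{p}\bigr),
\]
so the inequality $c_{p}-c_{\tau}^{T}A_{\tau}^{\dagger}(\lambda)A_{p}(\lambda)>0$ splits as $c_{p}-T_{1}-\mu T_{2}>0$, where $T_{1}$ and $T_{2}$ denote the bilinear forms attached to $A_{p}(\lambda_{0})$ and to $\triangle A_{p}$, respectively.

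Next I apply the realization identity \eqref{1} twice with the common choice $C:=A_{\tau}^{\dagger}(\lambda_{0})\triangle A_{\tau}$ and $c:=c_{\tau}$. For $T_{2}$ I take $b:=A_{\tau}^{\dagger}(\lambda_{0})\triangle A_{p}$, so that $C^{\times}=A_{\tau}^{\dagger}(\lambda_{0})(\triangle A_{\tau}+\triangle A_{p}c_{\tau}^{T})$; for $T_{1}$ I take $b:=A_{\tau}^{\dagger}(\lambda_{0})A_{p}(\lambda_{0})=A_{\tau}^{\dagger}(\lambda_{0})(A_{p}+\lambda_{0}\triangle A_{p})$, so that $C^{\times}=A_{\tau}^{\dagger}(\lambda_{0})\bigl(\triangle A_{\tau}+(A_{p}+\lambda_{0}\triangle A_{p})c_{\tau}^{T}\bigr)$. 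Invoking the fact (See page~\pageref{QR}) that $QT$ and $TQ$ share their nonzero spectrum, the eigenvalues of $C$, of the first $C^{\times}$, and of the second $C^{\times}$ are precisely the $\alpha'_{j}$, $\gamma_{p,j}$, and $\delta_{p,j}$ named in the statement. Realization theory thus produces
\[
1+\mu T_{2}=\prod_{j=1}^{l}\frac{1+\mu\gamma_{p,j}}{1+\mu\alpha'_{j}},\qquad 1+\mu T_{1}=\prod_{j=1}^{l}\frac{1+\mu\delta_{p,j}}{1+\mu\alpha'_{j}}.
\]

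Finally I solve for $T_{1}$ and $T_{2}$ (legitimate since $\mu\neq 0$; the case $\lambda=\lambda_{0}$ reduces the claim to its own hypothesis) and substitute into $c_{p}-T_{1}-\mu T_{2}>0$. A routine rearrangement that absorbs the resulting constants $-1/\mu$ and $-1$ into the right-hand side yields exactly the displayed inequality. The principal obstacle I anticipate is bookkeeping rather than technique: choosing the correct $(b,c)$ pair for each of the two realization applications and verifying that passing between $A_{\tau}^{\dagger}(\lambda_{0})X$ and $XA_{\tau}^{\dagger}(\lambda_{0})$ preserves the list of nonzero eigenvalues, so that the matrices named in the theorem genuinely coincide, up to this equivalence, with the $C$ and $C^{\times}$ produced by the algebra.
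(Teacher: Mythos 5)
Your proposal is correct and follows essentially the same route as the paper's own proof: the identical decomposition $A_{p}(\lambda)=A_{p}(\lambda_{0})+(\lambda-\lambda_{0})\triangle A_{p}$ combined with the update formula \eqref{1fr}, two applications of the realization identity with the same choices of $C$, $b$, and $C^{\times}$, the same appeal to the equality of nonzero spectra of $QT$ and $TQ$, and the same rearrangement absorbing the constants $1$ and $1/(\lambda-\lambda_{0})$ into the right-hand side. The only piece of the paper's argument you omit is the preliminary observation that $\mathbf{s}=(s,0)$ lets one replace $\bar{\tau}'$ by $\tau'$ in Cond.~3, but since the theorem statement is already phrased in terms of $\tau$ and $\tau'$, this omission is harmless for the claim as posed.
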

\begin{proof}
Consider the dual constraints of  $ D_{\lambda } (\mathbf{\triangle  A})$ as
  \begin{equation}\nonumber
\begin{pmatrix}
A^{T} & \triangle  A^{T}  \\
 \lambda  I_{m}& -I_{m}
\end{pmatrix} \begin{pmatrix}
y \\
 w
\end{pmatrix}+\begin{pmatrix}
s \\
0
\end{pmatrix}=\begin{pmatrix}
c \\
0
\end{pmatrix}.
\end{equation}
Observe that  the vector $0$ in
$\mathbf{s}=\begin{pmatrix} s\\ 0 \end{pmatrix} $
is of dimension $m,$ and $s=c-A^{T}y-\triangle  A^{T}  w .$ Thus, $\mathbf{s}\geq 0$ is identical with  ${s}\geq 0.$ Further,  $\mathbf{s}\geq 0$ is the feasibility of $ D_{\lambda } (\mathbf{\triangle  A})$ which is identical with the optimality of $ P_{\lambda _{0}} (\mathbf{\triangle  A}).$ Thus,   one can replace ${\bar{\tau }}^{'}$ with ${\tau }'$ in Cond.~3.
On the other hand,  $\tau $ and ${\tau }'$  denote the sets of indices in $\mathbf{x}$  respectively corresponding to  positive and zero variables of $x$.
 Thus,  $c_{\tau }^{T} A^{\dagger}_{\tau} (\lambda  ) A_{p}(\lambda  ) - c_{p} <0$  can be rewritten as $$ 1+ \frac{1}{\lambda  -\lambda _{0}} + c_{\tau }^{T} A^{\dagger}_{\tau} (\lambda ) (A_{p}+(\lambda -\lambda _{0}) \triangle  A_{p} +\lambda _{0}\triangle  A_{p}) <1+ \frac{1}{\lambda  -\lambda _{0}} + c_{p},$$
 for $p \in {\rm Range}({ {\tau  }'})$. Equivalently,  as
\begin{equation}\label{uh}
\begin{array}{c}
 1+(\lambda -\lambda _{0}) c_{\tau }^{T} A^{\dagger}_{\tau} (\lambda  ) \triangle  A_{p} + \frac{1}{\lambda  - \lambda _{0}}( 1+(\lambda -\lambda _{0}) c_{\tau }^{T} A^{\dagger}_{\tau} (\lambda  )(A_{p}+\lambda _{0}\triangle  A_{p})) \\ [2mm]<1+ \frac{1}{\lambda  -\lambda _{0}} + c_{p}.
 \end{array}
\end{equation}
 Considering \eqref{1fr} and realization theory, \eqref{uh} can be reworded as
 \begin{equation}\nonumber
\displaystyle \prod_{j=1}^{l} \frac{1+(\lambda  -\lambda _{0}) \gamma _{p,j}}{1+(\lambda  -\lambda _{0}) {\alpha ^{'}}_j} +\frac{1}{\lambda  -\lambda _{0}} \displaystyle \prod_{j=1}^{l} \frac{1+(\lambda  -\lambda _{0}) \delta _{p,j}}{1+(\lambda  -\lambda _{0}) \alpha ^{'}_j} < 1 + \frac{1}{\lambda  -\lambda _{0}} +{c}_{p},
 \end{equation}
where ${\alpha ^{'}_{j}},\gamma _{p,j}$ and $ \delta _{p,j},\  j=(1,\ldots ,l)$ are as stated. The proof is complete.
\end{proof}

\begin{rem} \label{rm3g}
For  $m>l$, one has better to consider
${\alpha ^{'}_{j}},\gamma _{p,j}$ and $ \delta _{p,j},\ j=(1, \ldots ,l)$ as the eigenvalues of $ A^{\dagger}_{\tau}(\lambda _{0}) \triangle  A_{\tau}, A^{\dagger}_{\tau }(\lambda _{0}) (\triangle  A_{\tau}+\triangle  A_{p}{c}_{\tau}^T) $ and $A^{\dagger}_{\tau }(\lambda _{0}) (\triangle  A_{\tau}+(A_{p}+\lambda _{0} \triangle  A_{p}) c_{\tau }^T )$, respectively.
\end{rem}

\section{Closed form of the optimal value function}
Without loss of generality, let $\bar{l} > 2m.$ In the sequel,
 the representation of the optimal value function $Z(\lambda  )$ is derived.
\begin{theorem}
Let  Cond.s 1-3  satisfy for a  fixed $\lambda  \in \Lambda $.
The representation of the optimal value function $Z(\lambda )$ is
\begin{equation}\label{obcr}
 Z(\lambda )=\frac{1}{\lambda  -\lambda _{0}} \displaystyle \prod_{j=1}^{\bar{l}} \frac{1+(\lambda  -\lambda _{0}) {{\alpha}} ^{ \times }_{j}}{1+(\lambda -\lambda _{0}) {{\alpha}}_j}-1,
 \end{equation}
 where the nonzero values  ${{\alpha}} ^{ \times }_{1}, \ldots ,{{\alpha}}^{ \times }_{\bar{l}}$ are nonzero eigenvalues of the matrix  $ ({\triangle  \mathbf{A}_{\bar{\tau} }}+\mathbf{b}{\mathbf{c}}_{\bar{\tau} }^T)\mathbf{A}^{\dagger}_{\bar{\tau} }(\lambda _{0})$ and nonzero values of ${{\alpha}}_{1}, \ldots ,{{\alpha}}_{\bar{l}}$ are nonzero eigenvalues of  the matrix $ \mathbf{\triangle  A}_{\bar{\tau}}\mathbf{A}^{\dagger}_{\bar{\tau}}(\lambda _{0}).$ .
\end{theorem}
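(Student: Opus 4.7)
The plan is to reduce $Z(\lambda)$ to a scalar rational function of the form $1+\tilde\lambda c^{T}(I+\tilde\lambda C)^{-1}b$, and then invoke the realization identity \eqref{1} to express it as the desired product of linear factors in eigenvalues.

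First I would use complementary slackness: since ${\mathbf{x}}^{*}_{j}(\lambda)=0$ for $j\in {\bar{\tau}}'$, we have
\begin{equation*}
Z(\lambda)=\mathbf{c}^{T}\mathbf{x}^{*}(\lambda)=\mathbf{c}_{\bar{\tau}}^{T}\mathbf{x}^{*}_{\bar{\tau}}(\lambda).
\end{equation*}
By Cond.~2 of Theorem~\ref{thg}, $\mathbf{x}^{*}_{\bar{\tau}}(\lambda)=\mathbf{A}_{\bar{\tau}}^{\dagger}(\lambda)\mathbf{b}$, so
\begin{equation*}
Z(\lambda)=\mathbf{c}_{\bar{\tau}}^{T}\mathbf{A}_{\bar{\tau}}^{\dagger}(\lambda)\mathbf{b}.
\end{equation*}
Then, using the standing assumption $2m\le \bar{l}$ and formula \eqref{1fr} with $A_{\tau}$ replaced by $\mathbf{A}_{\bar{\tau}}$, I would substitute
\begin{equation*}
\mathbf{A}_{\bar{\tau}}^{\dagger}(\lambda)=\bigl(I_{\bar{l}}+(\lambda-\lambda_{0})\mathbf{A}_{\bar{\tau}}^{\dagger}(\lambda_{0})\triangle \mathbf{A}_{\bar{\tau}}\bigr)^{-1}\mathbf{A}_{\bar{\tau}}^{\dagger}(\lambda_{0}).
\end{equation*}

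Next, to align with the realization framework, I would form the auxiliary quantity $1+(\lambda-\lambda_{0})Z(\lambda)$. Setting
\begin{equation*}
\tilde\lambda:=\lambda-\lambda_{0},\qquad c:=\mathbf{c}_{\bar{\tau}},\qquad C:=\mathbf{A}_{\bar{\tau}}^{\dagger}(\lambda_{0})\triangle \mathbf{A}_{\bar{\tau}},\qquad b:=\mathbf{A}_{\bar{\tau}}^{\dagger}(\lambda_{0})\mathbf{b},
\end{equation*}
the previous two displays yield
\begin{equation*}
1+(\lambda-\lambda_{0})Z(\lambda)=1+\tilde\lambda\, c^{T}(I_{\bar{l}}+\tilde\lambda C)^{-1}b,
\end{equation*}
which is exactly the scalar rational function treated in Section~2.4. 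The associate matrix is $C^{\times}=C+bc^{T}=\mathbf{A}_{\bar{\tau}}^{\dagger}(\lambda_{0})(\triangle \mathbf{A}_{\bar{\tau}}+\mathbf{b}\mathbf{c}_{\bar{\tau}}^{T})$, so applying identity \eqref{1} gives
\begin{equation*}
1+(\lambda-\lambda_{0})Z(\lambda)=\prod_{j=1}^{\bar{l}}\frac{1+(\lambda-\lambda_{0})\alpha_{j}^{\times}}{1+(\lambda-\lambda_{0})\alpha_{j}},
\end{equation*}
with $\alpha_{j}$ the eigenvalues of $C$ and $\alpha_{j}^{\times}$ those of $C^{\times}$. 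Solving for $Z(\lambda)$ yields the claimed closed form.

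The only step that requires care is the identification of the eigenvalue sets with those stated in the theorem, namely the nonzero eigenvalues of $\triangle \mathbf{A}_{\bar{\tau}}\mathbf{A}_{\bar{\tau}}^{\dagger}(\lambda_{0})$ and of $(\triangle \mathbf{A}_{\bar{\tau}}+\mathbf{b}\mathbf{c}_{\bar{\tau}}^{T})\mathbf{A}_{\bar{\tau}}^{\dagger}(\lambda_{0})$. This is handled by the standard fact (invoked on page~\pageref{QR} and already used in Theorems~\ref{th1c} and \ref{th2c}) that $TQ$ and $QT$ share their nonzero spectra; the zero eigenvalues contribute factors $1$ on both sides of \eqref{1} and can be discarded, so they have no effect on the final product. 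Apart from this bookkeeping, the argument is a direct specialization of the realization formula, and I expect no further substantive obstacle.
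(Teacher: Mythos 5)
Your proposal is correct and takes essentially the same route as the paper's own proof: write $Z(\lambda)=\mathbf{c}^{T}_{\bar{\tau}}\mathbf{A}^{\dagger}_{\bar{\tau}}(\lambda)\mathbf{b}$, substitute \eqref{1fr}, and apply the realization identity \eqref{1} to $1+(\lambda-\lambda_{0})Z(\lambda)$ with $C=\mathbf{A}^{\dagger}_{\bar{\tau}}(\lambda_{0})\triangle\mathbf{A}_{\bar{\tau}}$ and $C^{\times}=C+\mathbf{A}^{\dagger}_{\bar{\tau}}(\lambda_{0})\mathbf{b}\,\mathbf{c}^{T}_{\bar{\tau}}$. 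Your explicit bookkeeping of the $TQ$ versus $QT$ nonzero-spectrum identification is a detail the paper leaves implicit, but it is exactly what justifies the eigenvalue sets as stated in the theorem.
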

\begin{proof}
 It is clear that $Z(\lambda )= {\mathbf{c}}^{T}_{\bar{\tau} }\mathbf{x}_{\bar{\tau}}(\lambda  ) ={\mathbf{c}}^{T}_{\bar{\tau} } \mathbf{A}_{\bar{\tau} }^\dagger (\lambda  ) \mathbf{b}.$ Considering \eqref{1fr} and realization theory, it holds
\begin{equation*}
\begin{array}{rcl}
 1+(\lambda  -\lambda _{0}) Z(\lambda  )&=&1+(\lambda  -\lambda _{0}){\mathbf{c}}^{T}_{\bar{\tau} } (I_{\bar{l}}+(\lambda  -\lambda  _{0}) \mathbf{A}^{\dagger}_{\bar{\tau}}(\lambda  _{0}) {\triangle  \mathbf{A}_{\bar{\tau}}})^{-1}  \mathbf{A}^{\dagger}_{\bar{\tau}}(\lambda  _{0})\mathbf{b}\\[2mm]
&=& \displaystyle \prod_{j=1}^{\bar{l}} \frac{1+(\lambda  -\lambda _{0}) {{\alpha}} ^{ \times }_{j}}{1+(\lambda  -\lambda _{0}) {{\alpha}}_j}.
\end{array}
 \end{equation*}
 Thus,
 \begin{equation*}
 Z(\lambda)=\frac{1}{\lambda  -\lambda _{0}} \displaystyle \prod_{j=1}^{\bar{l}} \frac{1+(\lambda  -\lambda _{0}) {{\alpha}} ^{ \times }_{j}}{1+(\lambda  -\lambda _{0}) {{\alpha}}_j}-1
 \end{equation*}
  where ${{\alpha}} ^{ \times }_{1}, \ldots ,{{\alpha}}^{ \times }_{\bar{l}}$ and ${{\alpha}}_{1}, \ldots ,{{\alpha}}_{\bar{l}}$ are as stated.
\end{proof}
\begin{corollar}
In the spacial case $\bar{l}=2m$, the  closed form of the optimal value function is
$$Z(\lambda )=\frac{1}{\lambda  -\lambda _{0}}  \displaystyle \prod_{j=1}^{2m} \frac{1+(\lambda  -\lambda _{0}) {{\alpha}} ^{ \times }_{j}}{1+(\lambda  -\lambda _{0}) {{\alpha}}_j}- 1,$$
  where ${{\alpha}} ^{ \times }_{1}, \ldots ,{{\alpha}}^{ \times }_{2m}$ are eigenvalues of  $ \mathbf{A}^{-1}_{\bar{\tau} }(\lambda _{0})({\triangle  \mathbf{A}_{\bar{\tau} }}+\mathbf{b}{\mathbf{c}}_{\bar{\tau} }^T)$ and ${{\alpha}}_{1}, \ldots ,{{\alpha}}_{2m}$ are eigenvalues of ${\mathbf{A}_{\bar{\tau}}^{-1}(\lambda _{0})} \triangle  \mathbf{A}_{\bar{\tau}}.$
\end{corollar}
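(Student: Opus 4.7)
The statement is the special case of the preceding theorem under the extra hypothesis $\bar{l}=2m$, which (by the discussion surrounding \eqref{3fr}) means that $\mathbf{A}_{\bar{\tau}}(\lambda_0)$ is a square $2m\times 2m$ matrix of full rank. The plan is therefore to specialise the proof of the preceding theorem, replacing \eqref{1fr} by \eqref{3fr}, and to be a little more careful about the orientation of the matrix products so that the $2m$ eigenvalues that appear actually match the ones named in the statement.

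First I would observe that, since $\mathbf{A}_{\bar{\tau}}(\lambda_0)$ is square and nonsingular, $\mathbf{A}^{\dagger}_{\bar{\tau}}(\lambda_0)=\mathbf{A}^{-1}_{\bar{\tau}}(\lambda_0)$, and in a neighbourhood of $\lambda_0$ where $\mathbf{A}_{\bar{\tau}}(\lambda)$ remains invertible (which is exactly Cond.~1 restated via Corollary \ref{jhy}), formula \eqref{3fr} gives
\begin{equation*}
\mathbf{A}^{-1}_{\bar{\tau}}(\lambda) \;=\; \bigl(I_{2m}+(\lambda-\lambda_0)\,\mathbf{A}^{-1}_{\bar{\tau}}(\lambda_0)\,\triangle\mathbf{A}_{\bar{\tau}}\bigr)^{-1}\mathbf{A}^{-1}_{\bar{\tau}}(\lambda_0).
\end{equation*}
Since the primal solution is uniquely determined by $\mathbf{x}_{\bar{\tau}}(\lambda)=\mathbf{A}^{-1}_{\bar{\tau}}(\lambda)\mathbf{b}$, the optimal value is $Z(\lambda)=\mathbf{c}^{T}_{\bar{\tau}}\mathbf{A}^{-1}_{\bar{\tau}}(\lambda)\mathbf{b}$, so that
\begin{equation*}
1+(\lambda-\lambda_0)Z(\lambda)\;=\;1+(\lambda-\lambda_0)\,\mathbf{c}^{T}_{\bar{\tau}}\bigl(I_{2m}+(\lambda-\lambda_0)\,\mathbf{A}^{-1}_{\bar{\tau}}(\lambda_0)\triangle\mathbf{A}_{\bar{\tau}}\bigr)^{-1}\mathbf{A}^{-1}_{\bar{\tau}}(\lambda_0)\mathbf{b}.
\end{equation*}

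Next I would apply the realization identity \eqref{1} with the substitutions $\lambda\mapsto\lambda-\lambda_0$, $c^{T}\mapsto\mathbf{c}^{T}_{\bar{\tau}}$, $b\mapsto\mathbf{A}^{-1}_{\bar{\tau}}(\lambda_0)\mathbf{b}$, and $C\mapsto\mathbf{A}^{-1}_{\bar{\tau}}(\lambda_0)\triangle\mathbf{A}_{\bar{\tau}}$, for which
\begin{equation*}
C^{\times}\;=\;C+bc^{T}\;=\;\mathbf{A}^{-1}_{\bar{\tau}}(\lambda_0)\bigl(\triangle\mathbf{A}_{\bar{\tau}}+\mathbf{b}\,\mathbf{c}^{T}_{\bar{\tau}}\bigr).
\end{equation*}
Because both $C$ and $C^{\times}$ are square of size $2m\times 2m$, each has exactly $2m$ eigenvalues (counted with multiplicity), and \eqref{1} gives
\begin{equation*}
1+(\lambda-\lambda_0)Z(\lambda)\;=\;\prod_{j=1}^{2m}\frac{1+(\lambda-\lambda_0)\alpha^{\times}_{j}}{1+(\lambda-\lambda_0)\alpha_{j}},
\end{equation*}
from which solving for $Z(\lambda)$ yields the claimed formula.

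I expect the proof to be essentially mechanical, being a direct specialisation of the preceding theorem; the only points needing attention are (i) verifying that we may take $\mathbf{A}^{\dagger}_{\bar{\tau}}(\lambda_0)$ on the left of the two matrices (rather than on the right as in Remarks~\ref{rm1} and \ref{rm2g}) because, on square matrices, $XY$ and $YX$ share the \emph{same} eigenvalues (with multiplicity), not merely the same nonzero ones; and (ii) noting that since the basic solution is unique, the product in \eqref{obcr} is minimal and uses all $2m$ eigenvalues rather than only the nonzero ones. No genuine obstacle is anticipated beyond keeping the orientation of the matrix products consistent with the statement.
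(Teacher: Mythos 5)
Your proof is correct and follows essentially the same route as the paper: the paper proves the preceding theorem via \eqref{1fr} and the realization identity, and the corollary is precisely your square-case specialisation via \eqref{3fr}, with the left-multiplied matrices $\mathbf{A}^{-1}_{\bar{\tau}}(\lambda_0)\triangle\mathbf{A}_{\bar{\tau}}$ and $\mathbf{A}^{-1}_{\bar{\tau}}(\lambda_0)(\triangle\mathbf{A}_{\bar{\tau}}+\mathbf{b}\mathbf{c}^{T}_{\bar{\tau}})$ justified exactly as you say, by the equality of full spectra of $XY$ and $YX$ for square factors. One caveat, which you inherit from the paper's own statement of \eqref{obcr}: solving $1+(\lambda-\lambda_0)Z(\lambda)=\prod_{j=1}^{2m}\frac{1+(\lambda-\lambda_0)\alpha^{\times}_{j}}{1+(\lambda-\lambda_0)\alpha_{j}}$ literally gives $Z(\lambda)=\frac{1}{\lambda-\lambda_0}\bigl(\prod_{j=1}^{2m}\frac{1+(\lambda-\lambda_0)\alpha^{\times}_{j}}{1+(\lambda-\lambda_0)\alpha_{j}}-1\bigr)$, so the trailing ``$-1$'' in the displayed formula should read $-\frac{1}{\lambda-\lambda_0}$, and your closing remark that solving ``yields the claimed formula'' silently glosses over this discrepancy rather than flagging it.
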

\begin{rem}
Similar to Remarks \ref{rm1}-\ref{rm3g},
for $2m > \bar{l},$ one has better to consider
 ${{\alpha}} ^{ \times }_{1}, \ldots ,{{\alpha}}^{ \times }_{\bar{l}}$ and $\alpha_{1},\ldots ,  \alpha_{\bar{l}}$ as the eigenvalues of  the matrices $ \mathbf{A}^{\dagger}_{\bar{\tau} }(\lambda _{0}) ({\triangle  \mathbf{A}_{\bar{\tau} }}+\mathbf{b}{\mathbf{c}}_{\bar{\tau} }^T) $ and $\mathbf{A}^{\dagger}_{\bar{\tau}}(\lambda _{0}) \mathbf{\triangle  A}_{\bar{\tau}}$, respectively.
\end{rem}

\section{Finding all transition and change points }\label{sec6}
The following computational algorithm is devised to find all transition and change points of Problem  $ P_{\lambda } (\mathbf{\triangle  A})$ in $\mathbb{R}^{+}$. All induced invariancy intervals are identified by these points. The inputs of this algorithm is the perturbing direction $({\triangle  A},{\triangle  b})$ in addition to the fixed data $A,b$ and $c.$ After finding a transition point or a change point, the algorithm needs to select a point in the next immediate right invariancy interval as close as possible to the aforementioned identified point. This selection could be carried out by adding $\epsilon >0 $ to the identified transition or change point. The output of this algorithm is the point  $\bar{\lambda  }_{i} $ and $\underline{\lambda  }_{i}$ while it is expected that $\bar{\lambda  }_{i})=\underline{\lambda  }_{i+1}.$ These points are transition or change points when the associated problems in these points have optimal solutions. Distinguishing between transition points and change points can be carried out using the induced optimal partitions at their immediate adjacent intervals. The algorithm consists of the following steps.
\begin{description}
\item[\textbf{Step 0}:]
Let $i=0, \lambda _{i}=0$.
\item[\textbf{Step 1}:]
Solve Problems $P_{\lambda _{i}}(\mathbf{\triangle  A})$ and $P_{\lambda _{i}+\epsilon}(\mathbf{\triangle  A}),$ and identify $\bar{\pi }(\lambda  _{i})$ and $\bar{\pi }(\lambda  _{i}+ \epsilon)$, respectively. If $P_{\lambda _{i}+\epsilon}(\mathbf{\triangle  A})$ is infeasible or unbounded then stop. $ \lambda _{i}$ is a transition or change point
\item[\textbf{Step 2}:]
If $\bar{\pi }(\lambda  _{i}) \neq \bar{\pi }(\lambda  _{i}+\epsilon),$ then $\lambda  _{i}$ is a transition or change point.Let let $\underline{\lambda  }_{i} =\bar{\lambda  }_{i}=0,\  i=i+1,\ \lambda _{i} :=\bar{\lambda }_{i} +\epsilon .$
\item[\textbf{Step 3}:]
Find interval $(\underline{\lambda  }_{i} , \bar{\lambda  }_{i})$ as the intersection of the obtained intervals in Theorems \ref{th1c},  \ref{th2c}, and  \ref{then} for Problem  $P_{\lambda _{i}}(\mathbf{\triangle  A})$.
\item[\textbf{Step 4}:]
If $\bar{\lambda }_{i} =\infty$ stop, otherwise solve $P_{\bar{\lambda }_{i}}(\mathbf{\triangle  A}).$ If this problem is unbounded or infeasible, stop. Otherwise, identify $\bar{\pi }(\bar{\lambda } _{i}).$ Set $\lambda _{i+1} :=\bar{\lambda}_{i} +\epsilon , i:=i+1 $
\item[\textbf{Step 5}:]
Solve Problem $P_{\lambda _{i}}(\mathbf{\triangle  A})$. If this problem is infeasible or unbounded, then stop. Otherwise, identify $\bar{\pi }(\lambda  _{i})$ and go to \textbf{Step 3}.
\end{description}

\begin{rem} To find the induced optimal invariancy intervals and transition or change points to the left of $\lambda _{0}=0$, one can replace $\epsilon <0$  and substitutes $\bar{\lambda}$ with $\underline{\lambda }$ in Steps 2 and 4 of the algorithm.  This algorithm terminates in a finite number of iterations since the number of induced optimal partitions is finite.
\end{rem}
\section{Illustrative Examples}\label{sec7}
In this section, two concrete examples  are designed to clarify the approach.
In these examples, we set $\ \epsilon =0.1$.
The first example is an instance that includes a point which is both transition and change point.
 \begin{exam}\label{exampl1}
\rm {Consider the following problem
\begin{equation}\label{exam1}\nonumber
\begin{array}{ll}
 \min & \  -x_1-x_2 \\
 s.t.&(1+\lambda  )x_1+(1-\lambda  )x_2+x_{3}=1+\lambda \\
 &x_1, x_2, x_3 \geq 0,
\end{array}
\end{equation}
 where
  $t \in {\mathbb R}$ is the parameter.  This problem can be rewritten as
\begin{equation}\label{exam2}\nonumber
\begin{array}{ll}
\min & \  -x_1-x_2\\
 s.t.&x_1+x_2+x_{3}+\lambda (x_1-x_2-1)=1\\
 &x_1, x_2, x_3  \geq 0.
\end{array}
\end{equation}
By considering $x_{4}=x_1-x_2-1,$ this problem is equivalent with
 \begin{equation}\label{ex1npiex}
\begin{array}{lrrlrl}
 \min & \  -x_1&-x_2&& \\
 s.t.&x_1&+x_2&+x_{3}&+\lambda  x_{4}&=1\\
 &x_{1}&-x_{2}&&-x_{4}&=1\\
 &x_1,& x_2,& x_3, \geq 0,&  &
\end{array}
\end{equation}
in which only the coefficient matrix is perturbed. To be clear
\begin{equation}\nonumber
\mathbf{c}=\begin{pmatrix}
-1 \\-1\\ 0\\
0
\end{pmatrix},
\mathbf{A}=\begin{pmatrix}
1 &1& 1 & 0  \\
1& -1& 0 & -1
\end{pmatrix},
  \mathbf{\triangle  A}= \begin{pmatrix}
0&0& 0& 1\\
0&0& 0 & 0
\end{pmatrix},
  \mathbf{b}=\begin{pmatrix}
1 \\
1
\end{pmatrix}.
\end{equation}
 In the first iteration of the algorithm, when $\lambda  _{0}= 0$ and $ \lambda  _{1}= 0.1$, implementation of Step 1 reveals the induced primal-dual optimal solution of Problem \eqref{ex1npiex} as
 $$\mathbf{x}^*(0)=(0.56, 0.44, 0, -0.88), \mathbf{y}^*(0)=(-1,0), \mathbf{s}^*(0)=(0, 0, 1, 0)$$
 $$\mathbf{x}^*(0.1)=(0, 1.22, 0, -2.22), \mathbf{y}^*(0.1)=(-1.11,-0.11), \mathbf{s}^*(0.1)=(0.22,0, 1.11, 0).$$
  Corresponding induced optimal partitions are
 $$B(0)=\{ 1,2 \}, {B^{+}(0)}=\varnothing , {B^{-}(0)}=\{ 4\}, {N(0)}=\{3\}, {N^{0}(0)}=\varnothing,$$
 $${B(0.1)}=\{ 2 \}, {B^+(0.1)}=\varnothing , {B^-(0.1)}=\{ 4\}, {N(0.1)}=\{1,3\}, {N^0(0.1)}=\varnothing.$$
 By  Step 2, $\lambda  _{0}=0$ is as a transition or change point.
  From Step 3,  the corresponding  invariancy interval containing $\lambda _0=0.1$, is $(0,1)$, and $\bar{\lambda  }_{1}=1.$ Since  Problem \eqref{ex1npiex} at this point is  unbounded, the algorithm is terminated at step 4.

  This algorithm can be applied to find the invariancy intervals to the left of $\lambda _0=0$.  The immediate  right invariancy interval is (-1,0), while Problem \eqref{ex1npiex} at $\lambda =-1$ is unbounded.

  The results are reported in Table \ref{jex11} which includes  the induced optimal partition invariancy intervals and corresponding  induced optimal partitions. Representation of the optimal value function on these intervals are identified using \eqref{obcr} and appeared at the last column.
   Fig. \ref{labelFig1} depicts the optimal value function on these intervals.
\begin{table}[!ht]
  \centering
\begin{tabular}{|c|c|c|c|c|c|c|}
  \hline
     ${\rm Ind. \ Inv.\ Int.}$ & ${B}$ &   ${B^{+}}$ &  ${B^{-}}$ & ${N}$ & ${N^{0}}$ & $Z(\lambda  )$\\  \hline
       $(-1,0)$  & $\{ 1\}$ & $\varnothing $ & $\varnothing $ & $\{2,3\}$ & $\{ 4\}$ & $-1$ \\            \hline
     $ 0 $ &  $\{1,2\}$ & $\varnothing $ &  $\{ 4\}$ & $\{ 3\}$ & $\varnothing $ & $ -1 $ \\            \hline
             $(0,1)$  &  $\{ 2\}$ &  $\varnothing $ &  $\{ 4\}$ &  $\{ 1,3\}$  & $\varnothing $  & $\displaystyle \frac{\lambda  +1}{\lambda  -1}$ \\   \hline
\end{tabular}
  \caption{ Induced invariancy intervals, induced optimal partitions and  optimal value function in Example \ref{exampl1}.} \label{jex11}
\end{table}

As reflected in  Table \ref{jex11}, domain of the optimal value function is the open interval  $(-1,1).$
 It is continuous at point $\lambda =0$,  but fails to be continuous  or even defined at the end points.
 Observe that  when one passes from $\lambda =0$ to each neighboring interval, an index from $B$ moves to $N.$ Thus $\lambda =0$ is a transition point. Further, moving from the interval $(-1,0)$ to 0, not only $B$ and $N$ exchanges the index 2, but also ${B^{-}}$ and ${N^{0}}$ exchanges the index 4. Thus $\lambda =0$ is a change point, too.
\begin{figure}[!ht]
\begin{center}
\includegraphics[width=1\textwidth]{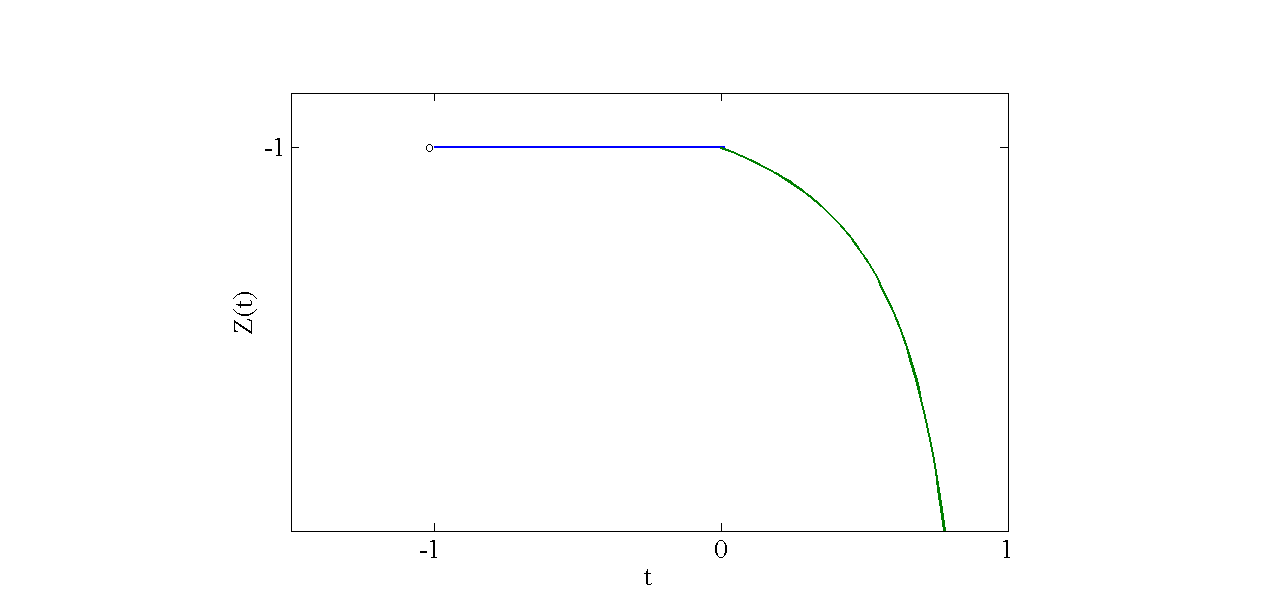}
\caption {The optimal value function $Z(\lambda )$ of Example \ref{exampl1}.}
\label{labelFig1}
\end{center}
\end{figure}}
\end{exam}
The following example contains a change point where in its neighborhoods, the representation of the optimal value function  dose not change.
\begin{exam}\label{exampl2}
 \rm { Assume the following problem
\begin{equation}\label{exam22}\nonumber
\begin{array}{rrrrl}
 \min & \  -x_1-x_2&& \\
 s.t.&t x_1+(1+\lambda  )x_2&+x_{3}&&=1+2  \lambda \\
 &(1 - \lambda  ) x_1+(1-2 \lambda  )x_2& &+x_{4}&=1-  \lambda \\
 &x_1, x_2, x_3 ,  x_4\geq 0,& &&
\end{array}
\end{equation}
where $t \in {\mathbb R}$ is the parameter.  One can rewrite this problem as
\begin{equation}\label{exam222}\nonumber
\begin{array}{lrrrrl}
 \min & \  -x_1&-x_2& \\
 s.t.&&+x_2&+x_{3}&&+\lambda ( x_1 +x_2-2)=1\\
 & x_1&+x_2 &&+x_{4}&+\lambda (-x_1-2x_2+1)=1\\
 &x_1,& x_2,& x_3, & x_4 & \geq 0.
\end{array}
\end{equation}
Substitution of  $x_{5}=x_1 +x_2-2$ and $x_{6}=-x_1-2x_2+1,$ results to
 \begin{equation}\label{ex1np2d}\nonumber
\begin{array}{lrrrlrrl}
 \min & \  -x_1&-x_2&&&& \\
 s.t.&&x_2&+x_{3}&&+\lambda  x_{5}&&=1\\
 &x_{1}&+x_{2}&&+x_{4}&&+\lambda  x_{6}&=1\\
 &x_{1}&+x_{2}&&&- x_{5}&&=2\\
 &-x_{1}&-2x_{2}&&&&- x_{6}&=-1\\
 &x_1,& x_2,& x_3, & x_{4}\geq 0 ,&&&
\end{array}
\end{equation}
 To be clear
\begin{equation}\nonumber
\mathbf{c}=\begin{pmatrix}
-1 \\-1\\ 0\\0\\0\\0
\end{pmatrix},
\mathbf{A}=\begin{pmatrix}
0&1 &1& 0 & 0 & 0 \\
1& 1& 0 & 1 & 0 & 0 \\
1& 1& 0 & 0 & -1 & 0 \\
-1& -2& 0 & 0 & 0 & -1
\end{pmatrix},
\end{equation}
\begin{equation}\nonumber
  \mathbf{\triangle  A}= \begin{pmatrix}
0&0& 0&0& 1&0\\
0&0& 0 & 0&0&1\\
0&0& 0 & 0&0&0\\
0&0& 0 & 0&0&0
\end{pmatrix},
  \mathbf{b}=\begin{pmatrix}
1 \\
1\\
2\\
-1
\end{pmatrix}.
\end{equation}

Table \ref{j22} has a summary of the results and Fig. \ref{labelFig2} denotes the corresponding optimal value function.
 As this table shows,  $\lambda  =0$ is simultaneously transition and change point, while $\lambda  =-1$ and $\lambda  =1$ are only transition points.
 The optimal value function is continuous on them but not differentiable. The point  $\lambda  =0.5$  is a  change point, and the optimal value function is continuous and differentiable at this point.
 The domain of the optimal value function is the closed interval $[-1, \infty )$ in this example.
Here, explicitly, at $\lambda  =0,$ the indices $2,3$ interchange between $B$ and $N,$ and the index $6$  interchanges between $B^{-}$ and $N^0.$ At $\lambda =1,$ the indices $2,4$ interchange only between $B$ and $N.$ Approaching from the left to $\lambda =0.5,$ the index 5 moves from $B^-$ to $N^0,$ and passing from this point it moves to $B^+.$
As can be seen in the neighborhood of this point, the representation of the optimal value function does not alter.
This is a visible display of the difference between a mere transition point and a mere change point.
 \begin{table}[!ht]
  \centering
\begin{tabular}{|c|c|c|c|c|c|c|}
  \hline
    ${\rm Ind. \ Inv.\ Int.}$ & ${B}$ &  ${B}^{+}$ &   ${B}^{-}$ &   ${N}$ & ${N^{0}}$ & $Z(\lambda  )$  \\
    \hline
     $-1$ & $\{ 1\}$ &   $\varnothing $ &   $\{5\} $ &   $\{2,3,4\}$ & $\{6\}$  &$-1$ \\
  \hline
  $(-1,0)$ & $\{1, 3\}$ &   $\varnothing $ &   $\{5\} $ &   $\{2,4\}$ & $\{6\}$  & $-1$ \\
  \hline
  $0$ & $\{1,2, 3\}$ &   $\varnothing $ &   $\{5,6\} $ &   $\{4\}$ & $\varnothing $ & $-1$  \\
  \hline
   $(0,0.5)$ &    $\{ 1,2\}$ &  $\varnothing $ & $\{ 5,6\}$ &  $\{3,4\}$ & $\varnothing $ & $\displaystyle \frac{-1- 2 \lambda ^{2}}{\lambda  ^{2} - \lambda +1}$ \\
            \hline
            $0.5$ &      $\{1,2\}$ &  $\varnothing $ &  $\{6\}$ &  $\{3,4\}$&  $\{5\}$  & $-2$ \\
            \hline
            $(0.5,1)$ &      $\{1,2\}$ & $\{5\} $ &  $\{6\}$ &  $\{3,4\}$&  $\varnothing $ & $\displaystyle \frac{-1- 2 \lambda  ^{2}}{\lambda ^{2} -\lambda  +1}$  \\
            \hline
            $1$ &      $\{1\}$ &  $\{5\}$ &  $\{6\}$ &  $\{2,3,4\}$&  $\varnothing $   & $-3$ \\
            \hline
            $(1,\infty )$ &      $\{1,4\}$ &  $\{5\}$ &  $\{6\}$ &  $\{2,3\}$&  $\varnothing $  &  $\displaystyle \frac{-1- 2 \lambda  }{\lambda  }$\\
            \hline
\end{tabular}
  \caption{Invariancy intervals, induced optimal partitions and the optimal value function in Example \ref{exampl2}.}  \label{j22}
\end{table}

\begin{figure}[!ht]
\begin{center}
\includegraphics[width=1\textwidth]{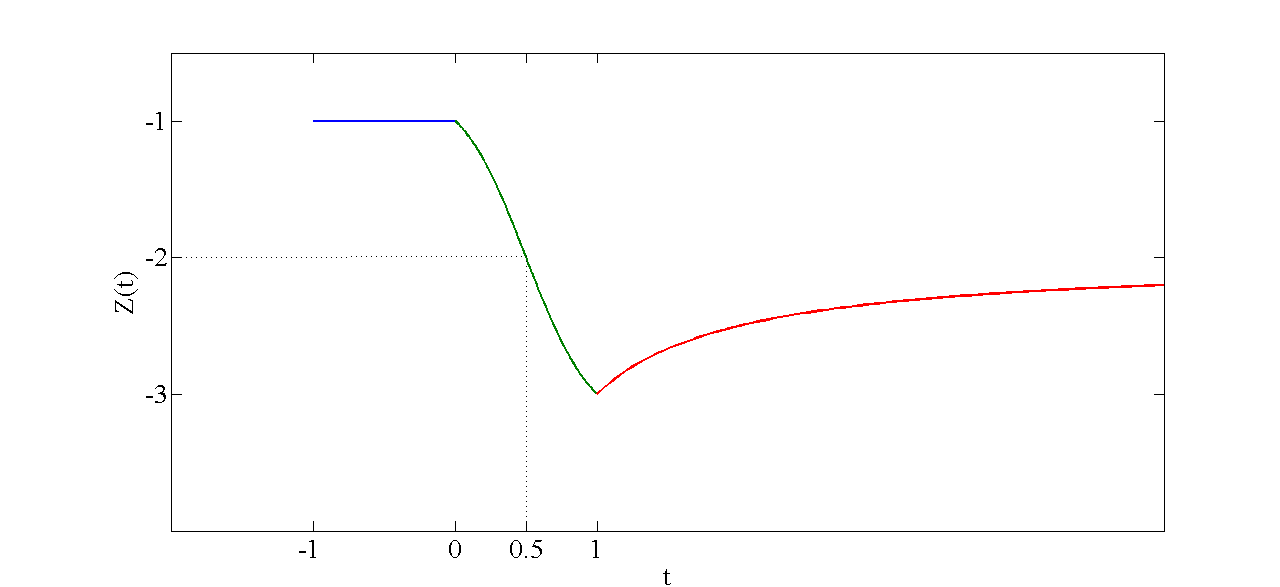}
\caption {The optimal value function $Z(\lambda  )$ in Example \ref{exampl2}.}
\label{labelFig2}
\end{center}
\end{figure}}
\end{exam}

\subsection{Computational results }
In the sequel, the results of executing the methodology on some test problems from Netlib are reported. They are in standard form, and their characteristics are reflected in Table \ref{tnetlib}.
\begin{table}[ht]
  \centering
\begin{tabular}{|c|c|c|}
 \hline
 Name &Rows &Columns\\
\hline
   Afiro &27&  51\\
   Blend  & 74 &114 \\
    Stocfor1& 117  &165 \\
   Scagr7& 129 & 185\\
  \hline
\end{tabular}
  \caption{ Characteristics of test problems} \label{tnetlib}
\end{table}

Computations are carried out on Asus FX570UD - FY213, Intel core i7-8550u with  12GB Ram with platform Windows 10 Enterprise. The algorithm has been implemented in MATLAB R2019b  using some main standard commands as linprog with the option 'interior-point-legacy', eig, pinv. We used standard commands as infsup, in0, intersect, of version 11 of the interval arithmetic toolbox, INTLAB, for interval computations.

In each parametric Problem $P(\triangle A, \triangle b)$,  $\lfloor \dfrac{n}{2} \rfloor$  elements of  $\triangle A$ are randomly selected and their values are produced by the pseduorandom normal codes of the Matlab. Further,  each element of  $\triangle b$ is produced by the uniform distribution from interval $[0,3]$.
 Computational results with $ \epsilon =0.015$ are reported in Table~\ref{j11}. 

\begin{sidewaystable}[h!]
  \centering
\begin{tabular}{|c|c|c|c|c|c|r|r|r|r|}
 \hline
 & \multirow{ 2}{*}{ Convex section }& \multicolumn{4}{c|}{No. of detected Ind.}  & \multicolumn{4}{c|}{CPU time (Sec.)}\\ \cline{3-10}
  & & Inv. &  Trans.& Chang. &  & &  &  & \\
  Problem  &of  $ \Lambda ; 0 \in \Lambda $&  Int. &  Points& Points & Both&  Min & Mean  &  Max  & Total\\
  \hline
       &  & &&&& 0.013 &   0.016 &   0.021& \\
   Afiro & (-3.136, 59.944)&  44 &29 & 3 &11 & 0.240  &  0.585  &  0.829&4151\\
  & & &  && &37.205&   64.131 &  92.316& \\
   & & & &&& 18.762&   29.608&   36.888 & \\
  \hline
      &  & &&& &0.351 &   0.376 &   0.545& \\
 Blend   &(-0.060,2.092]  & 34 &30& 3&0 &1.084&   1.384  &  2.665 &18935\\
  & & & && & 287.569&  428.632&  552.295&  \\
   & & && & &94.105&  126.503&  166.173&  \\
  \hline
       & & &&&& 2.379  &  2.600  &  7.238 & \\
   Scagr7  & [-0.466,0.668] & 35 & 31 & 0 & 3 &1.795   & 2.263  &  5.988 &70189\\
  & & & && & 959.6  &  1648.3  &  2687.8 &  \\
   & & &&&  &214.869 &  352.202 &  882.132&  \\
  \hline
\end{tabular}
  \caption{ Computational results for  $ \epsilon =0.015.$} \label{j11}
\end{sidewaystable}

In this table, the first column is the name of problems, the second column is the corresponding convex section of the domain of the optimal value function containing $\lambda _{0}=0.$ The four consequent columns are the number of detected invariancy intervals, transition points, change points and those that are both transition and change points.
The values in the last column (in CPU time) corresponds to the total times for determining all invarian????
 The ``min'' is the minimum  time spent in detection of one of the invariancy intervals. The ``mean'' is the average consumed time in detection of an invariancy interval.
 
 The ``min''  (``max'') is the minimum (maximum) time spent in detection of all invariancy intervals. The ``mean'' is the average consumed time in detection of an invariancy interval.
 The first row is related to the required time to find the eigenvalues of the matrices $ \mathbf{\triangle  A}_{\bar{\tau}}\mathbf{A}^{\dagger}_{\bar{\tau}}(\lambda _{0})$  (See Theorem \ref{th1c}),
$({\triangle  \mathbf{A}}_{\bar{\tau}}+ \mathbf{b}  e^{T}_{q}) {\mathbf{A}^{\dagger}_{\bar{\tau}}(\lambda _{0})}$ (See Theorem \ref{th2c}), and
$\triangle  A_{\tau} A^{\dagger}_{\tau}(\lambda _{0}), $  $ (\triangle  A_{\tau}+\triangle  A_{p}{c}_{\tau}^T) A^{\dagger}_{\tau }(\lambda _{0}),$ and $(\triangle  A_{\tau}+(A_{p}+\lambda _{0} \triangle  A_{p}) c_{\tau }^T ) A^{\dagger}_{\tau }(\lambda _{0})$ (See Theorem \ref{then}). The second to fourth rows are the times consumed to determine the intervals satisfying Cond.s 1-3, respectively.

 For example, in Afiro,  induced invariancy intervals are 44 with  29 points as mere transition points, 3 points as mere change points, and  11 points as both transition and change points.
 In the last column, 4151  is the total time  for determining all invariancy intervals.
The values 0.013, 0.016, and  0.021 are respectively, related to min, mean, and max of the required time to find the eigenvalues of some specific matrices in Theorems \ref{th1c}, \ref{th2c}, and \ref{then}. The values (0.240 , 0.585 , 0.829), (37.205, 64.131,  92.316), and (18.762, 29.608, 36.888) are respectively, related to min, mean, and max of the times consumed to determine the intervals satisfying Cond.s 1-3, respectively.

As  Tables \ref{tnetlib}  and  \ref{j11} show,  the computation time increases as the size of the problem increases. However, this is not a general rule. For example, the problem Stocfor1 only revealed one invariancy interval $(-0.011,0.012)$ for several random selections of $\triangle  A$ and $\triangle b$; and the total CPU time was almost less than 700 Sec.Furthermore, these results show that the time required to determine an interval satisfying Cond. 2, is as least as the two-thirds of the total time.

\begin{table}[ht]
  \centering
\begin{tabular}{|c|c|c|c|c|c|c|}
 \hline
 &\multirow{ 2}{*}{ Convex section } & \multicolumn{4}{c|}{No. of detected Ind.}  & \multirow{ 2}{*}{Total CPU  } \\ \cline{3-6}
   & &Inv. &  Trans.&  Chang. &  & \\
 Precision & of  $ \Lambda ; 0 \in \Lambda $ &  Int. &  Points & Points & Both &Time (Sec.) \\
  \hline
  0.005 & $ (-3.136,59.944)$ &56 & 36 & 7&12 &5408\\
0.010  & $ (-3.136,59.944)$ & 48  &31 &5 & 11& 5262\\
0.015  & $ (-3.136,59.944)$ & 44  &29 &3 &11 &4151 \\
0.020 & $ (-3.136,60.102)$ & 41  & 27&3 &10&3783\\
   0.025& $ (-3.136,60.102)$ & 42  &28&  2&11& 3859\\
 0.030 &  $ (-3.136,60.102)$ & 40 &25  &2& 12 & 3601\\
  0.035  &  $ (-3.136,60.102)$ & 40 &25&2  &12 &3683\\
     0.040  &  $ (-3.136,60.102)$ & 34&22 & 1&10 &3662\\
 0.045    &   $ (-3.136,60.102)$ &32 &20& 1&10& 3221\\
  0.050&  $ (-3.284,59.944)$ & 29& 17&1& 11 & 2594 \\
  \hline
\end{tabular}
  \caption{ Afiro's computational results with different accuracy values.} \label{j113}
\end{table}

In order to analyse the effect of $\epsilon $ {on the number of detected invariancy intervals}, the behavior of Afiro is investigated and the results are denoted in Table \ref{j113} for $\epsilon =0.005$ up to $\epsilon =0.050$ with step size $0.050$.
 As it is depicted in this table, when $ \epsilon $ increases,  the total running time for computing  the convex domain containing zero  would decrease.
 For example, the total running time for computing the convex invariancy interval $\Lambda $  for $ \epsilon =0.015$ and $ \epsilon =0.020$ is equal to 4151 and 3783, respectively, with  the exception at $\epsilon =0.020$ and $\epsilon =0.025$.
Analogously,  as $ \epsilon $ increases, the number of detected invariancy intervals { might decrease}. This means that some invariancy intervals might be divided into some subintervals by decreasing the value of $\epsilon$. 
 For example, the number of invariancy intervals of Problem Afiro for $ \epsilon =0.015$ and $ \epsilon =0.020$ are 44 and 41, respectively. The exception is for $\epsilon =0.020$ and $\epsilon =0.025$ that this number rises from 41 to 42.
\section*{Acknowledgment}
We would like to thank Azarbaijan Shahid Madani University for its support.

\section{Conclusion}
In this paper, we considered a uni-parametric linear program when an identical parameter linearly perturbed the left and the right sides of constraints. The induced optimal partition was defined, and a  methodology for identifying the corresponding invariancy interval was provided. It was proved that the optimal value function is fractional on each interval; it is continuous at internal jointing points. A computational algorithm was presented, enabling to find all invariancy intervals.  In addition to the traditional concept of the transition point, the concept of change point was also introduced.    Examples indicated the validity of the findings. As future work, this study could be conducted for more than one parameter or a uni-parametric linear program when left and right-hand-side of constraints in addition to the objective coefficients were linearly perturbed by an identical parameter, as well as for the case where the perturbation is not linear.

\end{document}